\theoremstyle{plain}
\newtheorem{thm}{Theorem}[section]
\newtheorem{lem}[thm]{Lemma}
\newtheorem{cor}[thm]{Corollary}
\newtheorem{conj}[thm]{Conjecture}
\theoremstyle{definition}
\newtheorem{que}[thm]{Question}
\theoremstyle{plain}
\newtheorem{rem}[thm]{Remark}
\renewcommand{\geq}{\geqslant}
\renewcommand{\leq}{\leqslant}
\renewcommand{\ge}{\geqslant}
\renewcommand{\le}{\leqslant}
\def\lref#1{Lemma~$\ref{#1}$}
\def\tref#1{Theorem~$\ref{#1}$}
\newcommand\SQS{{\rm SQS}}
\newcommand\STS{{\rm STS}}
\newcommand\TS{{\rm TS}}
\newcommand\K{\mathcal{K}}
\begin{document}

\title{Zero-sum flows for Steiner systems}

\author[S. Akbari, H.R. Maimani, L. Parsaei Majd, and I. M. Wanless]{Saieed Akbari}

\address{S. Akbari, Department of Mathematical Sciences, Sharif University of Technology, Tehran, Iran, and School of Mathematics, Institute for Research in Fundamental Sciences (IPM), P.O. Box 19395-5746.}

\email{s$_{-}$akbari@sharif.ir}

\author[]{Hamid Reza Maimani}

\address{H. R. Maimani, Mathematics Section, Department of Basic Sciences,
Shahid Rajaee Teacher Training University, P.O. Box 16785-163, Tehran,
Iran.}

\email{maimani@ipm.ir}

\author[]{Leila Parsaei Majd}

\address{L. Parsaei Majd, Mathematics Section, Department of Basic Sciences,
Shahid Rajaee Teacher Training University, P.O. Box 16785-163, Tehran,
Iran.}

\email{leila.parsaei84@yahoo.com}

\author[]{Ian M. Wanless}

\thanks{The research of the first author was partly funded by Iranian National Science Foundation (INSF) under the contract No. 96004167. 
The research of the fourth author was supported by Australian Research Council grant DP150100506}

\address{I. M. Wanless, School of Mathematics,
Monash University, Clayton Vic 3800, Australia.}

\email{ian.wanless@monash.edu}

\begin{abstract}
Given a $t$-$(v, k, \lambda)$ design, $\mathcal{D}=(X,\mathcal{B})$, a
zero-sum $n$-flow of $\mathcal{D}$ is a map $f :
\mathcal{B}\longrightarrow \{\pm1,\ldots, \pm(n-1)\}$ such that for
any point $x\in X$, the sum of $f$ over all blocks incident with $x$
is zero.  For a positive integer $k$, we find a zero-sum $k$-flow for
an $\STS(u w)$ and for an $\STS(2v+7)$ for $v\equiv 1~(\mathrm{mod}~4)$,
if there are $\STS(u)$, $\STS(w)$ and $\STS(v)$ such that the $\STS(u)$
and $\STS(v)$ both have a zero-sum $k$-flow. In 2015, it was
conjectured that for $v>7$ every $\STS(v)$ admits a zero-sum
$3$-flow. Here, it is shown that many cyclic $\STS(v)$ have a zero-sum
$3$-flow.  Also, we investigate the existence of zero-sum flows for
some Steiner quadruple systems.
\end{abstract}

\subjclass[2010]{05B05; 05B20; 05C21}

\keywords{Zero-sum flow, Steiner triple system; Steiner quadruple system}

\maketitle


\section{Introduction} \label{sec1}
For a graph $G$ we use $V(G)$ and $E(G)$ to denote the vertices and
edges of $G$, respectively. A \textit{zero-sum flow} of $G$ is an
assignment of non-zero real numbers to the edges of $G$ such that the
sum of the values of all edges incident with any given vertex is
zero. For a natural number $n\ge2$, a \textit{zero-sum $n$-flow} is a
zero-sum flow with values from the set $\{\pm 1,\ldots, \pm(n-1)\}$. 
For a subset $S \subseteq E(G)$, the \textit{weight} of $S$ is
defined to be the sum of the values of all edges in $S$.

A $t$-$(v, k, \lambda)$ design $\mathcal{D}$ (briefly,
\textit{$t$-design}), is a pair $(X, \mathcal{B})$, where $X$ is a
$v$-set of points and $\mathcal{B}$ is a collection of $k$-subsets of
$X$, called \textit{blocks}, with the property that every $t$-subset
of $X$ is contained in exactly $\lambda$ blocks. A $t$-$(v, k,
\lambda)$ design is also denoted by $S_{\lambda}(t, k, v)$. If
$\lambda = 1$, then $S_{\lambda}(t, k, v)$ is called a 
\textit{Steiner system}, and $\lambda$ is usually omitted.  If $t=2$
and $k=3$, then a $2$-$(v, 3, \lambda)$ design is denoted by
$\TS(v,\lambda)$, and it is called a \textit{triple system}. 
For a triple system if $\lambda=1$, then the design is called a
\textit{Steiner triple system} and is denoted by $\STS(v)$.
 
Given an indexing of the points and blocks of a $t$-design
$\mathcal{D}$ with the block set $\mathcal{B}=\{B_{1},\ldots,B_{b}\}$,
the \textit{incidence matrix} of $\mathcal{D}$ is a $v\times b ~(0, 1)$-matrix
$A=[a_{ij}]$, where
\begin{equation*}
a_{ij}= \left\{
\begin{array}{rl}
1&   \quad \text{if }x_i\in B_j, \\
0 &  \quad \text{otherwise.}
\end{array} \right.
\end{equation*}
We refer the reader to \cite{CRC} for notation and further results on designs.

Given a $t$-$(v,k,\lambda)$-design, $\mathcal{D}=(X,\mathcal{B})$, a
zero-sum $n$-flow of $\mathcal{D}$ is a map 
$f:\mathcal{B}\longrightarrow \{\pm1,\ldots, \pm(n-1)\}$ such that for
any point $x\in X$, the sum of $f$ over all blocks incident with $x$
is zero. In other words, the sum of the block weights around any point
is zero, i.e.
$$w(x)=\underset{x \in B}\sum f(B)=0.$$ This is equivalent to finding
a vector in the nullspace of the incidence matrix of the design whose
entries are all in the set $\{\pm1,\ldots, \pm(n-1)\}$. The following
theorem and two conjectures appeared in \cite{Dr.Akbari2}.

\begin{thm}\label{in3}
Every non-symmetric $2$-$(v, k, \lambda)$
design admits a zero-sum $k$-flow for some positive integer $k$.
\end{thm}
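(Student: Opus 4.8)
The plan is to show that the rational null space of the $v\times b$ incidence matrix $A$ of $\mathcal{D}$ contains a vector all of whose coordinates are nonzero; clearing denominators and bounding the entries then produces a zero-sum $k$-flow for a suitable $k$. Write $r=\lambda(v-1)/(k-1)$ for the replication number, so that $AA^{T}=(r-\lambda)I_{v}+\lambda J_{v}$. Since a nontrivial design has $2\le k<v$, we have $r-\lambda=\lambda(v-k)/(k-1)>0$, and the eigenvalues of $AA^{T}$ are $r-\lambda$ (with multiplicity $v-1$) and $r-\lambda+\lambda v=rk$; hence $\det(AA^{T})=rk(r-\lambda)^{v-1}\neq0$ and $\operatorname{rank}_{\mathbb{Q}}A=v$. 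Because $\mathcal{D}$ is non-symmetric, $bk=vr$ forces $b>v$, so $\dim\ker A=b-v\ge1$.

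The heart of the matter is the claim that no standard basis vector $e_{i}\in\mathbb{Q}^{b}$ lies in the row space of $A$; equivalently, for each block $B_{i}$ some $f\in\ker A$ has $f_{i}\neq0$. Suppose instead that $e_{i}=A^{T}y$ for some $y=(y_{x})_{x\in X}\in\mathbb{Q}^{v}$, i.e. $\sum_{x\in B_{\ell}}y_{x}=\delta_{i\ell}$ for every block $B_{\ell}$. Summing this over all $\ell$ gives $r\sum_{x}y_{x}=1$, so $\sigma:=\sum_{x}y_{x}=1/r$. Summing instead over the blocks through a fixed point $p$, and using that a point $x\neq p$ lies with $p$ in exactly $\lambda$ blocks while $p$ itself lies in $r$ blocks, we obtain $(r-\lambda)y_{p}+\lambda\sigma=[p\in B_{i}]$, whence $y_{p}=\bigl([p\in B_{i}]-\lambda/r\bigr)/(r-\lambda)$. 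In particular $y_{p}=1/r$ for every $p\in B_{i}$, so $1=\sum_{x\in B_{i}}y_{x}=k/r$, i.e. $k=r$ --- contradicting that $\mathcal{D}$ is non-symmetric. This proves the claim.

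Consequently $\ker A$ is not contained in any coordinate hyperplane $H_{i}=\{x:x_{i}=0\}$. Since a vector space over an infinite field is not a union of finitely many proper subspaces, $\ker A\not\subseteq\bigcup_{i=1}^{b}H_{i}$, so $\ker A$ contains a vector with all coordinates nonzero; such a vector can be taken in $\mathbb{Q}^{b}$, for if $u^{(1)},\dots,u^{(d)}$ is a rational basis of $\ker A$ then each functional $t\mapsto\sum_{j}t_{j}u^{(j)}_{i}$ on $\mathbb{Q}^{d}$ is nonzero (else $\ker A\subseteq H_{i}$, so $e_{i}$ would lie in the row space), and the complement of the union of their $b$ kernels is a nonempty subset of $\mathbb{Q}^{d}$. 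Clearing denominators gives $f\in\ker A\cap\mathbb{Z}^{b}$ with $f(B_{j})\neq0$ for all $j$; setting $k:=1+\max_{j}|f(B_{j})|\ge2$, the identity $Af=0$ says exactly that $\sum_{B\ni x}f(B)=0$ for every $x\in X$, so $f:\mathcal{B}\to\{\pm1,\dots,\pm(k-1)\}$ is a zero-sum $k$-flow of $\mathcal{D}$.

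The only genuinely design-theoretic input is the displayed claim of the second paragraph, and this is where the whole obstacle lies; everything surrounding it is routine linear algebra over $\mathbb{Q}$. The single delicate point is the degenerate range $k\ge v$ (and $k\le1$), which is ruled out by the standing convention that a $2$-design is nontrivial, guaranteeing $r>\lambda$ in the computations above.
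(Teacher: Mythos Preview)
The paper does not itself prove this theorem; it is quoted from \cite{Dr.Akbari2} as a known result, so there is no in-paper proof to compare against. Your argument is correct and is essentially the standard one: from $AA^{T}=(r-\lambda)I+\lambda J$ one gets $\operatorname{rank}A=v$, and the computation in your second paragraph showing that $e_i\in\operatorname{row}(A)$ forces $k=r$ (hence $b=v$) is exactly the crux. The passage from ``$\ker A\not\subseteq H_i$ for each $i$'' to an everywhere-nonzero rational (then integer) null vector via the union-of-hyperplanes argument is routine and correctly executed, and your remark that nontriviality ($2\le k<v$) is needed to ensure $r>\lambda$ is well placed.
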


\begin{conj}\label{conj1}
Every non-symmetric design admits a zero-sum $5$-flow.
\end{conj}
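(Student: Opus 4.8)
The plan is to start from the existence result already in hand and then drive the entry sizes down into the target range, working in the setting of $2$-designs to which \tref{in3} applies. First I would record the structural input from Fisher's inequality: a non-symmetric design has $b>v$ and $r>\lambda$, so with $r-\lambda=\lambda(v-k)/(k-1)>0$ the Gram matrix $AA^{T}=(r-\lambda)I+\lambda J$ is positive definite. Hence $A$ has full row rank $v$, the rational nullspace $\ker_{\mathbb{Q}}A$ has dimension exactly $b-v\ge 1$, and the integer nullspace $\Lambda=\ker_{\mathbb{Q}}A\cap\mathbb{Z}^{b}$ is a lattice of the same rank. Now \tref{in3} supplies a nowhere-zero vector $f_{0}\in\Lambda$, i.e.\ a zero-sum $k_{0}$-flow for some (possibly large) $k_{0}$. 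The entire problem thus reduces to exhibiting a nowhere-zero lattice point of $\Lambda$ inside the cube $[-4,4]^{b}$.

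The reduction tool I would use is the supply of \emph{trades}. A pair $T=\{T_{1},T_{2}\}$ of disjoint block-collections covering every point equally often has signed indicator $g_{T}=\chi_{T_{1}}-\chi_{T_{2}}\in\Lambda$, supported only on the blocks of $T$ and with all nonzero entries equal to $\pm1$. These vectors furnish local moves: starting from $f_{0}$, I would repeatedly add integer multiples of small trades so as to damp any entry with $|f(B)|\ge 5$, in the spirit of the flow-reduction steps used for graphs. In a Steiner triple system the smallest such move is the Pasch trade, of volume $4$ with entries $\pm1$, giving a rich stock of local adjustments. The goal of this step is a sequence $f_{0},f_{1},\ldots$ in $\Lambda$ along which $\max_{B}|f_{i}(B)|$ is non-increasing while no entry is ever sent to $0$, terminating with $\|f\|_{\infty}\le 4$.

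The hard part is guaranteeing that this reduction actually halts at $4$ rather than stalling, and here the analogy with nowhere-zero flows on graphs both guides and warns. For graphs the corresponding statements (Seymour's $6$-flow theorem, and the progress toward the $5$-flow conjecture) are proved by a modular decomposition, lifting a nowhere-zero $\mathbb{Z}_{2}\times\mathbb{Z}_{3}$-flow to a bounded integer $6$-flow; this lifting works precisely because the boundary operator of a graph is totally unimodular. Incidence matrices of designs are \emph{not} totally unimodular, and this is exactly the gap. One can produce a nowhere-zero vector of $\ker(A\bmod 5)$ by a counting or combinatorial-nullstellensatz argument once $b-v$ is large enough, since $\dim\ker(A\bmod 5)\ge b-v$; but such a modular solution need not lift to an integer null vector with entries in $\{\pm1,\ldots,\pm4\}$. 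The decisive step is therefore a genuinely lattice-theoretic one: to show that $\Lambda$ contains a nowhere-zero point in $[-4,4]^{b}$ for \emph{every} non-symmetric design. I expect this short-vector question, rather than the bookkeeping of the trade moves, to be the true obstruction, and it is what keeps the statement at the level of a conjecture.
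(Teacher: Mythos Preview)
The statement you are addressing is \emph{Conjecture~\ref{conj1}}, not a theorem: the paper does not prove it, and indeed merely records it (together with \cjref{conj2}) as an open problem taken from \cite{Dr.Akbari2}. There is therefore no ``paper's own proof'' to compare against. Your write-up is honest about this---you yourself identify the decisive lattice step as ``the true obstruction'' and say it ``keeps the statement at the level of a conjecture''---so what you have produced is not a proof but a heuristic outline of why the problem is hard.

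Concretely, the gap you name is real and is exactly where the argument fails to close. Producing a nowhere-zero vector modulo $5$ in $\ker(A\bmod 5)$ does not lift to an integer null vector with entries in $\{\pm1,\ldots,\pm4\}$ because incidence matrices of designs are not totally unimodular, and your trade-reduction scheme has no termination guarantee: you give no potential function that strictly decreases, nor any reason why a coordinate cannot be forced to $0$ or why the process cannot cycle. The Pasch-trade step is also specific to Steiner triple systems and does not generalise to arbitrary non-symmetric $2$-designs, which is the full scope of the conjecture. In short, your proposal correctly locates the difficulty but does not resolve it, and the paper makes no claim to resolve it either.
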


\begin{conj}\label{conj2}
Every $\STS(v)$, with $v > 7$, admits a zero-sum $3$-flow.
\end{conj}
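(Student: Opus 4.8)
The conjecture ranges over \emph{all} Steiner triple systems, so rather than one argument I would organize the attack as a three-part plan: a structural reduction for systems with a large automorphism group, recursive constructions propagating a zero-sum $3$-flow upward in the order, and a residual case analysis. Two preliminary remarks frame the difficulty. First, if $A$ denotes the point-block incidence matrix of an $\STS(v)$ then $AA^{T}=\tfrac{v-3}{2}I+J$ is nonsingular for $v>3$, so $A$ has full row rank $v$ and its nullspace has dimension $\tfrac{v(v-7)}{6}$, which is positive exactly when $v>7$; thus a real zero-sum flow exists for every $v>7$, and, sharpening \tref{in3} (which already supplies a $k$-flow for some $k$), the whole problem is to force the entries into $\{\pm1,\pm2\}$. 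Second, when $r=(v-1)/2$ is odd, i.e.\ $v\equiv3\pmod 4$, each point must lie on an odd, hence positive, number of blocks carrying a value of absolute value $2$; this is a constraint every construction must respect, though not an obstruction to existence.

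For a cyclic $\STS(v)$ I would identify the points with $\mathbb{Z}_v$ and let $\sigma\colon x\mapsto x+1$ be the translation automorphism, then assign a single value $c(O)\in\{\pm1,\pm2\}$ to all blocks of each $\sigma$-orbit $O$, which makes $w(p)$ independent of $p$ so that it suffices to annihilate $w(0)$. If $v\equiv1\pmod 6$ every orbit is full and meets the point $0$ in exactly three blocks, whence $w(0)=3\sum_{O}c(O)$; as soon as there are at least two orbits, that is $v\ge13$, one chooses the $c(O)$ to sum to zero and is done, which settles the conjecture for all cyclic $\STS(v)$ with $v\equiv1\pmod 6$. If $v\equiv3\pmod 6$ there is a unique short orbit, that of $\{0,v/3,2v/3\}$, and the point $0$ lies in only one of its blocks, contributing a single value $d\in\{\pm1,\pm2\}$, so $w(0)\equiv d\not\equiv0\pmod3$ and pure constancy cannot work. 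The repair is to relax constancy on one full orbit, say the orbit of a base block $\{0,a,b\}$, replacing its constant value by a function $g\colon\mathbb{Z}_v\to\{\pm1,\pm2\}$ for which $x\mapsto g(x)+g(x-a)+g(x-b)$ is constant; this works cleanly when $a,b$ have a convenient residue pattern modulo $3$, for instance $\{a,b\}\equiv\{1,2\}\pmod3$, in which case $g$ may depend only on $x\bmod3$, but a quick look at small orders shows that a base block of the required type need not be available. Clearing the remaining cyclic systems with $v\equiv3\pmod6$ --- by taking an assignment constant on the orbits of a proper subgroup of $\mathbb{Z}_v$, by exploiting that the short orbit is a parallel class of the design, or by a direct integral null-vector argument --- is where I expect the real difficulty to lie.

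For orders outside the structured families I would invoke the composition results announced in the abstract: an $\STS(uw)$ inherits a zero-sum $k$-flow whenever one of its ingredient systems $\STS(u)$ carries one, and an $\STS(2v+7)$ with $v\equiv1\pmod4$ inherits one from a suitable $\STS(v)$. Seeding these with the small base cases that can be verified directly, one obtains zero-sum $3$-flows for ever larger families by induction on the order. The obstruction to finishing \cjref{conj2} this way is twofold: the products and doublings do not reach every admissible order (prime orders, and orders not of the form $2v+7$ with $v\equiv1\pmod4$, are missed), and they build only particular systems of a given order while the conjecture quantifies over all of them. A complete proof would therefore need a Wilson-type closure statement --- that the set of orders for which \emph{every} $\STS$ has a zero-sum $3$-flow is closed under the standard design constructions --- together with a way to realize an arbitrary $\STS$ within that machinery; absent such a theorem one is left with design-by-design verification, and the automorphism-free systems with $v\equiv3\pmod6$ look like the hardest residue to remove.
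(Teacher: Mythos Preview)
The statement is presented in the paper as an open \emph{conjecture}, not a theorem; the paper does not prove it, and your proposal, as you yourself acknowledge, does not either. What the paper does establish are partial results toward it, and your attack plan recovers most of them by essentially the same route.

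For cyclic $\STS(v)$ with $v\equiv1\pmod6$ your orbit-constant assignment is exactly the paper's approach (attributed there to \cite[Theorem 1.7]{Dr.Akbari}). For $v\equiv3\pmod6$ your idea of breaking constancy on one full orbit via a function of $x\bmod3$, available when some base block $\{0,a,b\}$ has $\{a,b\}\equiv\{1,2\}\pmod3$, is precisely the paper's device: such an orbit is what the paper calls a \emph{Type~$3$} orbit, and the explicit pattern $-1,+1,+1,-1,+1,+1,\ldots$ along it in the proof of \tref{zs3} is your $g$.

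The one substantive piece you are missing is \lref{v9}: when $v\equiv9\pmod{18}$ a full orbit of Type~$3$ \emph{always} exists. The argument is a one-line double count of pairs $\{a,b\}$ with $a\not\equiv b\pmod3$ across the blocks; if no full orbit were of Type~$3$ one obtains $2vt=3(v/3)^2$ and hence $t=v/6$, contradicting $t\le(v-3)/6$. With this lemma your plan settles the conjecture for all cyclic $\STS(v)$ with $v\equiv1\pmod6$ or $v\equiv9\pmod{18}$, which is exactly \tref{zs3}. For $v\equiv3$ or $15\pmod{18}$ the paper, like you, cannot guarantee a Type~$3$ orbit (it exhibits cyclic $\STS(15)$ with only Type~$2$ full orbits) and falls back to a zero-sum $4$-flow in \tref{remcases}; your assessment that this is where the real difficulty lies matches the paper's. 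Your remarks on the limitations of the recursive constructions---that they miss prime orders and build only particular systems rather than all---are likewise in line with the scope of the paper's results.
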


Motivated by Conjecture \ref{conj2}, in Section \ref{seccyclic} we prove that every cyclic $\STS(v)$ with $v>7$ admits a zero-sum $k$-flow for $k=3$ or $k=4$. In particular, we prove 
Conjecture \ref{conj2} for cyclic $\STS(v)$ of order $v\equiv1\pmod 6$ and $v\equiv9\pmod{18}$ and
for many cyclic $\STS(v)$ of other orders.

\medskip

For graphs $G$ and $H$, the \textit{join} of $G$ and $H$ is the graph
$G\vee H$ with vertex set $V=V(G)\cup V(H)$ and edge set $E=E(G) \cup
E(H) \cup \{uv : u\in V(G), v\in V(H)\}$.  The \textit{complete graph}
$K_n$ is the graph with $n$ vertices in which every two distinct
vertices are adjacent.
The \textit{complete bipartite graph} $K_{n, m}$ is $U\vee V$ where
$U$ and $V$ are disjoint independent sets with $\vert U\vert=n$ and
$\vert V\vert=m$.  The \textit{complete tripartite graph}
$K_{\ell,n,m}$ is $U\vee V\vee W$, where $U$, $V$ and $W$ are disjoint
independent sets with $\vert U\vert=\ell$, $\vert V\vert=n$ and 
$\vert W\vert=m$.

\section{Zero-sum flows on $\STS(vw)$ and $\STS(2v+7)$} \label{sec2}

Let $\STS(v)$ and $\STS(w)$ be two Steiner triple systems such that
the $\STS(v)$ has a zero-sum $k$-flow for $k\geq 3$. In this
section, we provide a zero-sum $k$-flow for a Steiner triple system
$\STS(v w)$. Moreover, we find a zero-sum $k$-flow for an
$\STS(2v+7)$, where $v\equiv 1~(\mathrm{mod}~4)$. 

\medskip

Our constructions will use Latin squares.
A \textit{Latin square} of order $n$ with entries from a set $X$ is an
$n\times n$ array $L$ such that every row and column of $L$ is a
permutation of $X$. Suppose that $L_1$ and $L_2$ are two Latin squares
of order $n$ with entries from $X$ and $Y$, respectively. We say that
$L_1$ and $L_2$ are \textit{orthogonal} provided that, for every $x\in X$ 
and $y \in Y$, there is a unique cell $(i, j)$ such that $L_1(i, j) = x$ 
and $L_2(i, j) = y$. Note that by \cite[p.12]{CRC} for every
positive integer $v\notin\{2, 6\}$, there are orthogonal Latin squares
of order $v$.  A \textit{transversal} of a Latin square is a set of
entries which includes exactly one representative from each row and
column and one of each symbol.

\begin{rem}\label{transversal}
It is not hard to see that a Latin square has an orthogonal mate if
and only if it can be decomposed into disjoint transversals.
\end{rem}

We refer the reader to \cite{transurv} for a survey of results on
transversals in Latin squares.

\medskip

Next, we recall the following construction for $\STS(vw)$, see
\cite{triplesystem}.

\bigskip\noindent
\textbf{Construction A.} \textbf{$\boldsymbol{\STS(vw)}$-Construction}\label{consA}
\medskip

Let $(X, \mathcal{B})$ be an $\STS(v)$ on the set $X =
\{x_1,\ldots,x_{v}\}$ and $(Y, \mathcal{B}')$ be an $\STS(w)$ on the
set $Y =\{y_1,\ldots, y_w \}$. Then define $(Z, \mathcal{C})$ as an
$\STS(vw)$ on the set $Z = \{z_{ij}, 1\leq i \leq v, 1 \leq j \leq w\}$ 
with two types of blocks as follows:

For $j=1,\ldots, w$, consider a copy $K^{j}_{v}$ of the complete
graph $K_{v}$, with vertex set $\{z_{1j},\ldots, z_{v j}\}$. Using
$\mathcal{B}$, one can partition the edges of each $K^{j}_{v}$ into
triangles, for $j=1,\ldots, w$. We say that the blocks made by these
triangles are of Type A. Now, consider the complete graph $K_w$ with
vertex set $K^{j}_{v}$ for $1 \leq j \leq w$. Using $\mathcal{B}'$ one
can partition the edges of $K_w$ into triangles.  Join every vertex of
$K^{i}_{v}$ to every vertex of $K^{j}_{v}$, for $1 \leq i<j \leq w$. 
Using the partition of $K_w$, every triangle in $K_w$ corresponds
to a complete tripartite graph $K_{v,v,v}$ which has $3v^2$
edges. Now, for each triangle $\{K^{p}_{v}, K^s_{v}, K^t_{v}\}$ of $K_w$, 
where $1 \leq p<s<t\leq w$, consider a Latin square $L=L(p,s,t)$ 
of order $v$ on the set $\{z_{1t},\ldots, z_{v t}\}$ such that the
rows and columns are indexed by $\{z_{1p},\ldots, z_{v p}\}$ and
$\{z_{1s},\ldots,z_{vs}\}$, respectively. For $1 \leq i\leq v$ 
and $1 \leq j\leq v$, we make a block $\{z_{ip}, z_{js}, L(z_{ip},z_{js}) \}$ 
of Type B. It is not hard to see that all blocks
of Type A and Type B together form an $\STS(v w)$.

\medskip

This construction allows us to prove the following lemma.

\begin{lem}
  Let $v$ and $w$ be two positive integers for which there exist
  $\STS(v)$ and $\STS(w)$, where at least one of the $\STS(v)$ and
  $\STS(w)$ has a zero-sum $k$-flow for some $k\geq 3$. Then there
  exists an $\STS(v w)$ which has a zero-sum $k$-flow.
\end{lem}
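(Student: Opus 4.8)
The plan is to lift the given flow through Construction~A. By interchanging the two systems if necessary (Construction~A with $v$ and $w$ swapped still produces an $\STS(vw)$), we may assume it is the $\STS(v)=(X,\mathcal{B})$ that carries a zero-sum $k$-flow $f$; we may also assume $v\ge3$, since an $\STS$ of smaller order admits no zero-sum flow. Form the $\STS(vw)=(Z,\mathcal{C})$ as in Construction~A. On the Type~A blocks the flow is essentially forced: for each $j\in\{1,\dots,w\}$, give the Type~A block of $K^{j}_{v}$ coming from $B\in\mathcal{B}$ the value $f(B)$. The Type~A blocks incident with a point $z_{ij}$ are exactly those of $K^{j}_{v}$ coming from the blocks of $\mathcal{B}$ through $x_i$, so they contribute $\sum_{x_i\in B}f(B)=0$ at $z_{ij}$. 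It remains only to assign values to the Type~B blocks so that they contribute $0$ at every point of $Z$.

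Here we exploit the freedom to choose each Latin square $L(p,s,t)$ in Construction~A. Since an $\STS(v)$ exists, $v$ is odd, so take every $L(p,s,t)$ to be the Cayley table of $\mathbb{Z}_v$, with rows, columns and symbols all indexed by $\mathbb{Z}_v$. For $d\in\mathbb{Z}_v$ put $T_d=\{(a,a+d):a\in\mathbb{Z}_v\}$. Each $T_d$ is a transversal: it meets every row and every column once, and because $v$ is odd its symbol multiset $\{2a+d:a\in\mathbb{Z}_v\}$ is all of $\mathbb{Z}_v$. The sets $T_0,\dots,T_{v-1}$ partition the $v^2$ cells, and, crucially, each row, each column and each symbol class of the square meets every $T_d$ in exactly one cell. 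Now pick nonzero integers $\alpha_0,\dots,\alpha_{v-1}\in\{\pm1,\pm2\}$ with $\alpha_0+\cdots+\alpha_{v-1}=0$; such a choice exists for every odd $v\ge3$ (for $v=3$ take $\{1,1,-2\}$, and in general $(v+1)/2$ copies of $1$, $(v-3)/2$ copies of $-1$, and one $-2$). Inside the copy of $K_{v,v,v}$ attached to each triangle of $K_w$, give every Type~B block coming from a cell in $T_d$ the value $\alpha_d$ (the same $\alpha_d$ may be reused for every triangle). Then in each such $K_{v,v,v}$ every point lies on exactly one Type~B block from each $T_d$, so the Type~B blocks contribute $\alpha_0+\cdots+\alpha_{v-1}=0$ at that point; summing over the triangles of $K_w$ through a given copy, the total Type~B contribution at every point of $Z$ is $0$.

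Combining the two assignments gives $w(z_{ij})=0$ for all $i,j$. Every block value is nonzero and lies in $\{\pm1,\dots,\pm(k-1)\}$: the values of $f$ do, and $\{\pm1,\pm2\}\subseteq\{\pm1,\dots,\pm(k-1)\}$ because $k\ge3$. Hence $\mathcal{C}$ carries a zero-sum $k$-flow, which proves the lemma.

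The only genuinely substantive step is the second paragraph. With an arbitrary Latin square one would be left solving the coupled row/column/symbol zero-sum conditions cell by cell; choosing a cyclic group table makes its decomposition into broken diagonals available and collapses the whole problem to the elementary observation that $0$ can be written as a sum of $v$ nonzero elements of $\{\pm1,\pm2\}$. Everything else is bookkeeping: checking that a point of each of the three roles within a $K_{v,v,v}$ sees every broken diagonal exactly once, and discarding the degenerate small orders.
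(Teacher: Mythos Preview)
Your proof is correct and follows essentially the same approach as the paper: inherit the $k$-flow on the Type~A blocks, decompose each Latin square in Construction~A into transversals, and assign to the transversals values from $\{\pm1,\pm2\}$ summing to zero. The only cosmetic difference is that the paper invokes the existence of an orthogonal mate for $v\notin\{2,6\}$ to obtain the transversal decomposition, whereas you make the concrete choice of the $\mathbb{Z}_v$ Cayley table and its broken diagonals.
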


\begin{proof}
Suppose that an $\STS(v)$ has a zero-sum $k$-flow for $k\geq 3$. In
Construction A, we let the blocks of Type A inherit a zero-sum
$k$-flow from the $\STS(v)$. According to Remark \ref{transversal},
since $v\notin\{2,6\}$, in Construction A one can choose Latin
squares that decompose into transversals $T_1,\ldots,T_v$, each of
which corresponds to a collection of blocks in the $\STS(vw)$.  Now,
assign values $+2, -1, -1$ to the blocks from $T_1, T_2, T_3$,
respectively. Then, label the blocks from $T_i$ with $(-1)^i$ for
$i=4,\ldots,v$. In this way, the Type B blocks defined by each Latin
square contribute a total of zero to the weight of every vertex.
\end{proof}

\medskip

We need the following observation to prove our next results. This can be
found in \cite[p.41]{ed2}.

\begin{rem}\label{decom}
For odd $v$, the edges of $K_{v+7}$  can be partitioned into $v+7$
triangles and $v\,$ $1$-factors. Note that each vertex appears in
exactly three triangles.
\end{rem}

\bigskip\noindent
\textbf{Construction B. $\boldsymbol{\STS(2v+7)}$-Construction}\label{consB}
\medskip

Let $(X, \mathcal{A})$ be a Steiner triple system of order $v$, with
$X = \{x_1,\ldots,x_{v}\}$, and let $Y$ be a set of size $v + 7$, such
that $X \cap Y =\varnothing$. Using Remark \ref{decom}, partition the
edges of $K_{v+7}$ with vertex set $Y$ into a set $L$ containing $v+7$
triangles and a set $F =\{F_1,\ldots, F_v\}$ containing $v$
$1$-factors. Set $Z = X \cup Y$ and define a collection of triples
$\mathcal{B}$ as follows: We can consider a block corresponding to
each triangle in $L$. Put all such blocks in a set $N$. Now, join
$x_i$ to the end vertices of each edge of $F_i$, for $i=1,\ldots, v$,
to obtain some new triangles. Let $T$ be a set of blocks corresponding
to these new triangles. Then, $(Z,\mathcal{B})$ is a Steiner triple
system of order $2v + 7$, where $\mathcal{B}=\mathcal{A}\cup N \cup T$.  
See \cite[p.41--42]{ed2}.

\begin{rem}\label{decompose}
Let $n\geq 8$ be an even positive integer, and let
$Y=\{y_1,\ldots,y_{n}\}$. It is clear that $n=v+7$, for some odd
$v\geq 1$. We know that the edges of $K_n$, with vertex set $Y,$ can be
partitioned into $n$ triangles and $v$ $1$-factors, $\{F_1,\ldots,
F_v\}$. If we assign the value $1$ to each of the $n$ triangles, then
the sum of the values of the three triangles containing $y_i$ is $3$,
for $i=1,\ldots, n$.
  
Now, if $v=1$, then we have just one $1$-factor, $F_1$. Assign $-3$ to
each edge of $F_1$. Otherwise, $v\ge3$. Assign $-1$ to the edges of
$F_1$, $F_2$ and $F_3$. Then assign $(-1)^j$ to $F_j$ for
$j=4,\ldots,v$.  Since $v$ is odd, in all cases the sum of the values
of the edges in $\cup_{j=1}^v F_j$ incident with $y_i$ is $-3$, for
$i=1,\ldots, n$. Hence the total weight allocated to the edges and 
triangles incident with any vertex in $Y$ is $0$.
\end{rem}
 
Next, from a zero-sum $k$-flow for $\STS(v)$, we show how to obtain a
zero-sum $k$-flow for an $\STS(2v+7)$, if $v\equiv
1~(\mathrm{mod}~4)$. We say that a graph $G$ has a \textit{k-null
  1-factorisation} if $G$ has a zero-sum $k$-flow and there is a
$1$-factorisation in which the weight of each $1$-factor is zero. We
call each $1$-factor in a $k$-null $1$-factorisation of $G$ a
\textit{$k$-null $1$-factor}. We use the following lemma, see the
proof of Lemma 4.2 in \cite{Dr.Akbari}.

\begin{lem}\label{kn,n}
There exists a $3$-null $1$-factorization of $K_{n,n}$ for every
$n\geq 3$. If $n$ is even and $n\neq 6$, then $K_{n,n}$ has a $2$-null
$1$-factorization.
\end{lem}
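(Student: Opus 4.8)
The plan is to recast the statement in terms of Latin squares. Identify the $n^2$ edges of $K_{n,n}$ with the cells of an $n\times n$ array so that the two parts of $K_{n,n}$ are the set of rows and the set of columns, and so that a symbol class of a Latin square $L$ of order $n$ — the set of cells holding a fixed symbol — is a perfect matching; thus Latin squares of order $n$ are exactly the $1$-factorisations of $K_{n,n}$. Under this dictionary an edge weighting is a filling of the array, a zero-sum $k$-flow is a filling with entries from $\{\pm1,\dots,\pm(k-1)\}$ whose every row sum and column sum is $0$, and a $k$-null $1$-factorisation is such a filling together with a Latin square $L$ all of whose symbol classes also sum to $0$.

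The key step is to build the filling from an orthogonal mate. Suppose first $n\notin\{1,2,6\}$, so by \cite[p.12]{CRC} there is a Latin square $L$ of order $n$ with an orthogonal mate $L'$. Choose $\psi\colon\{1,\dots,n\}\to\{\pm1,\dots,\pm(k-1)\}$ with $\sum_{t=1}^{n}\psi(t)=0$. For $k=2$ this requires $n$ even and is then realised by $n/2$ copies of $1,-1$; for $k=3$ it is possible for every $n\ge3$ (take $n/2$ copies of $1,-1$ if $n$ is even, and $2,-1,-1$ followed by $(n-3)/2$ copies of $1,-1$ if $n$ is odd). Fill the array by $f(i,j)=\psi\bigl(L'(i,j)\bigr)$. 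Every row and every column of $L'$ is a permutation of $\{1,\dots,n\}$, so each row sum and column sum of $f$ equals $\sum_{t}\psi(t)=0$ and $f$ is a zero-sum $k$-flow. For a symbol $s$ of $L$, orthogonality of $L$ and $L'$ forces the cells with $L=s$ to carry each value of $L'$ exactly once, whence $\sum_{L(i,j)=s}f(i,j)=\sum_{t}\psi(t)=0$; so the $1$-factorisation coming from $L$ is $k$-null. This proves the $2$-null assertion for all even $n\neq6$ and the $3$-null assertion for all $n\ge3$ with $n\neq6$.

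It remains to treat $K_{6,6}$ for $k=3$, where no orthogonal mate is available. Here I would instead use a circulant filling of the cyclic $1$-factorisation: put $L(i,j)=i+j\pmod6$, whose symbol class for $s$ is $\{(i,s-i):i\in\mathbb{Z}_6\}$, and set $f(i,j)=\phi(j-i\bmod6)$ with $\phi(0)=\phi(1)=2$ and $\phi(2)=\phi(3)=\phi(4)=\phi(5)=-1$. Every row and column sum equals $\sum_{t\in\mathbb{Z}_6}\phi(t)=0$, and the symbol class of $s$ sums to $\sum_{i\in\mathbb{Z}_6}\phi(s-2i)=2\bigl(\phi(s)+\phi(s-2)+\phi(s-4)\bigr)$, which is $2\bigl(\phi(0)+\phi(2)+\phi(4)\bigr)=0$ when $s$ is even and $2\bigl(\phi(1)+\phi(3)+\phi(5)\bigr)=0$ when $s$ is odd.

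The one genuinely non-obvious point is the substitution $f=\psi\circ L'$ in the key step; granting it, the verification that rows, columns and the symbol classes of $L$ all vanish is immediate, and the residual order $n=6$ costs only the short computation above. I would also note in passing that the $2$-null statement should be read with $n\ge4$: for $n=2$ any zero-sum $2$-flow on $K_{2,2}$ is alternating around the $4$-cycle, so both $1$-factors have weight $\pm2$ and no $2$-null $1$-factorisation exists.
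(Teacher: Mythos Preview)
Your proof is correct. The paper does not actually prove this lemma; it simply cites \cite[Lemma~4.2]{Dr.Akbari} for the argument, so there is no in-paper proof to compare against directly. That said, your orthogonal-mate trick---weighting cells by $\psi\circ L'$ so that rows, columns, and the transversals coming from $L$ all inherit the vanishing sum $\sum_t\psi(t)=0$---is exactly the mechanism the present paper uses elsewhere (see the proof of the $\STS(vw)$ lemma in Section~\ref{sec2}, where transversals $T_1,\dots,T_v$ are weighted $2,-1,-1,1,-1,\dots$), so your approach is very much in the spirit of the surrounding material. The separate handling of $n=6$ via the circulant filling is clean and the arithmetic checks; your closing remark that the $2$-null claim tacitly requires $n\ge4$ is also well taken.
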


\begin{thm}\label{2v+7}
Let $v>9$ be a positive integer and $v\equiv 1~(\mathrm{mod}~4)$. If
there exists an $\STS(v)$ with a zero-sum $k$-flow for some positive
integer $k\geq 2$, then there exists an $\STS(2v +7)$ with a zero-sum
$k$-flow.
\end{thm}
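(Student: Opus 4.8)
The plan is to build the $\STS(2v+7)$ via Construction B, starting from the given $\STS(v)$ that carries a zero-sum $k$-flow $f_0$, and then to assign flow values to the three families of blocks $\mathcal{A}$, $N$, $T$ so that every vertex of $Z = X\cup Y$ receives zero total weight, using only values in $\{\pm1,\ldots,\pm(k-1)\}$. The key observation is that the vertices split into two kinds with very different local structure: a point $x_i\in X$ lies in the blocks of $\mathcal{A}$ incident with it (inherited from $\STS(v)$) together with exactly those $T$-blocks coming from the $1$-factor $F_i$, which form a perfect matching on $Y$; a point $y\in Y$ lies in the three $N$-blocks (triangles of $L$ through $y$) together with the $T$-blocks arising whenever $y$ is an endpoint of an edge of some $F_i$. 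So the strategy is to decouple: give the $\mathcal{A}$-blocks the flow $f_0$ (so the $X$-side is handled except for the contribution of $T$-blocks through each $x_i$, which we must force to zero), and arrange the $N$- and $T$-blocks so that (i) for each $i$ the $T$-blocks from $F_i$ sum to zero at $x_i$, and (ii) every $y\in Y$ gets zero.

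First I would invoke Remark~\ref{decompose}: with $n=v+7$ (even, and $\ge 8$ since $v>9$) it already produces a weighting that puts $+1$ on each of the $n$ triangles of $L$ and suitable $\pm1,\pm3$ values on the $1$-factors so that \emph{if the $F_j$-edges were themselves blocks} every $y\in Y$ would get $0$. But in Construction B the $F_j$-edges are not blocks; instead each edge $\{y_a,y_b\}$ of $F_i$ becomes the block $\{x_i,y_a,y_b\}$. The crucial point is that the weight such a block contributes to $y_a$ and to $y_b$ is the \emph{same} number it contributes to $x_i$, so transferring Remark~\ref{decompose}'s edge-values verbatim to the corresponding $T$-blocks still makes the $Y$-side balance — provided we also keep the $N$-blocks at value $+1$. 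It remains only to fix the $X$-side: point $x_i$ currently has weight $0$ from $\mathcal{A}$ (thanks to $f_0$) plus $\sum_{e\in F_i}(\text{value assigned to }e)$, and in Remark~\ref{decompose} the value is the \emph{constant} $c_i\in\{-3,-1,+1\}$ on all edges of $F_i$, so $x_i$ picks up $c_i\cdot|F_i| = c_i(v+7)/2\neq 0$. This is the obstruction to overcome.

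To repair the $X$-side I would, for each $i$, replace the constant weighting on $F_i$ by a \emph{balanced} one: split $F_i$ (a perfect matching on the $(v+7)$-element set $Y$, so $(v+7)/2$ edges) into two equal halves and put $+c_i$ on one half and $-c_i$ on the other — this is where $v\equiv1\pmod4$ enters, since then $(v+7)/2$ is even and the split into two equal parts is possible, and it makes $w(x_i)=0$ while leaving $w(x_i)$'s $\mathcal{A}$-part untouched. However, negating half of each $F_i$ destroys the $Y$-side balance that Remark~\ref{decompose} guaranteed. The clean way to restore it is to choose the bipartition of each $F_i$ coherently across all $i$: note that the $v$ matchings $F_1,\ldots,F_v$ on $Y$ together with some extra structure form (after adding a perfect matching $F_0$, since $v+1$ matchings on $v+7$ points is not quite a $1$-factorisation — one uses instead the near-$1$-factorisation or groups the $F_i$ in pairs) a structure on which Lemma~\ref{kn,n} applies: pair up the $F_i$ and within each pair realise a $2$-null or $3$-null $1$-factorisation of an appropriate $K_{m,m}$, so that the net weight at every $y\in Y$ from the modified $T$-blocks is still exactly what it was before (namely $-3$ minus the old $N$-contribution of $+3$, giving $0$). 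Concretely: group $\{F_j,F_{j+1}\}$ with their common "colour" $(-1)^j$, use the bipartite structure between the two $1$-factors to apply Lemma~\ref{kn,n}, and handle the leftover $F_1,F_2,F_3$ (valued $-1$) and any parity leftover separately by an explicit small adjustment. All values used are $\pm1$, $\pm2$, $\pm3$ together with the values of $f_0$, so the resulting flow is a zero-sum $\max(k,4)$-flow at worst; a little care in choosing the small adjustments (using that $k\ge2$ already forces $k\ge3$ for non-trivial $\STS$, or absorbing into existing weights) keeps it a zero-sum $k$-flow.

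The main obstacle I anticipate is precisely this simultaneous balancing: making each $F_i$ internally weight-balanced (for the $X$-side) while keeping the \emph{sum over all $i$} of the $T$-block weights at each $y\in Y$ equal to the fixed target $-3$ (for the $Y$-side). The condition $v\equiv1\pmod4$ is what makes $(v+7)/2$ even and hence lets every $F_i$ be halved; the hypothesis $v>9$ ensures $v+7\ge8$ so Lemma~\ref{kn,n} and Remark~\ref{decompose} both apply; and the bookkeeping of which small $K_{m,m}$'s to pair the $F_i$ into, and how to patch the three exceptional factors $F_1,F_2,F_3$, is the part that needs the most careful case analysis but is ultimately routine.
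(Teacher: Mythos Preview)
Your diagnosis of the obstruction is correct: transferring the constant value $c_i$ from Remark~\ref{decompose} onto the $T$-blocks over $F_i$ leaves $x_i$ with weight $c_i(v+7)/2\ne0$. But the proposed repair has a genuine gap.

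If you split each $F_i$ into two equal halves carrying $+c_i$ and $-c_i$, then summing the $T$-contribution over all $y\in Y$ gives
\[
\sum_{y\in Y}\sum_{i=1}^{v}\epsilon_i(e_{i,y})\,c_i
\;=\;2\sum_{i=1}^{v}c_i\sum_{e\in F_i}\epsilon_i(e)\;=\;0,
\]
since each $\epsilon_i$ is balanced. So it is \emph{impossible} to keep the $T$-contribution at every $y$ equal to $-3$ while keeping the $N$-triangles at $+1$; any such scheme forces the triangles of $L$ themselves to carry a zero-sum weighting at every $y$, a separate nontrivial condition you have not established (and which is impossible with values $\pm1$ alone, since three $\pm1$'s never sum to zero). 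Moreover, the appeal to Lemma~\ref{kn,n} is not grounded: the union of two $1$-factors $F_j,F_{j+1}$ of $K_{v+7}$ is a $2$-regular graph on $Y$ (a disjoint union of even cycles), not a complete bipartite graph, so there is no $K_{m,m}$ on which to run the lemma.

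The paper's construction resolves both issues by splitting $Y$ rather than the individual $F_i$. Writing $v+7=2(t+7)$ with $t\ge3$ odd (this is exactly where $v\equiv1\pmod4$, $v>9$ is used), one takes $K_{v+7}=\mathcal{K}\vee\mathcal{K}'$ with $|\mathcal{K}|=|\mathcal{K}'|=t+7$. Remark~\ref{decom}/\ref{decompose} is applied to $\mathcal{K}$ as written and to $\mathcal{K}'$ with every sign negated; attaching $x_i$ to $M_i\cup M'_i$ for $1\le i\le t$ then gives $w(x_i)=0$ by cancellation between the halves, while each $y$ already gets $0$ from within its own half. The cross-edges now genuinely form $K_{t+7,t+7}$, and Lemma~\ref{kn,n} supplies a $2$-null $1$-factorisation whose $t+7=v-t$ classes are attached to $x_{t+1},\dots,x_v$: the zero weight of each class handles these $x_i$, and the zero-sum-flow property handles every $y$. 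All auxiliary values are $\pm1$, so the outcome is a zero-sum $k$-flow, not merely a $\max(k,4)$-flow.
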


\begin{proof}
Let $(X, \mathcal{A})$ be an $\STS(v)$, with $X=\{x_1,\ldots,x_{v}\}$,
which has a zero-sum $k$-flow, and let $Y$ be a set of size $v + 7$
such that $X\cap Y=\varnothing$. Keep the values of the blocks in
$\mathcal{A}$.  Consider the Steiner triple system on $X \cup Y$ given
in Construction B. Since $v\equiv 1~(\mathrm{mod}~4)$ and $v>9$, we
know that $v+7=4s$ for some integer $s\ge5$.  Let $2s=t+7$, for some
odd $t\ge3$. We have $K_{v+7}=\K\vee\K'$, where $\K$ and $\K'$ are both
copies of $K_{t+7}$.  By Remark \ref{decom} we can decompose the edges
of $\K$ into $1$-factors $M_{1},\ldots,M_{t}$ and $t+7$ triangles.  We
give each of these triangles a weight of $1$.  For $1\le i\le t$ and
for each edge $e$ in $M_i$ we then make a new block containing $x_i$
and the end vertices of $e$. We assign this block a weight equal to
the value that $e$ was assigned in Remark \ref{decompose}.  We then
decompose $\K'$ in a similar way into $t+7$ triangles and 1-factors
$M'_{1},\ldots,M'_{t}$.  We allocate a weight of $-1$ to the $t+7$
triangles and we give each edge in $M'_i$ the negative of the weight
that the edges in $M_i$ were given.  In this way, when we join $x_i$
to $M'_i$ in the same way that we joined $x_i$ to $M_i$, the total weight
of the blocks incident with $x_i$ will be zero for $1\le i\le
t$. Similarly, Remark \ref{decompose} shows that for any vertex in
$Y,$ there is zero total weight for the blocks so far constructed that
are incident with that vertex.

The edges between $\K$ and $\K'$ form
a $K_{t+7,t+7}$, which has a $2$-null $1$-factorization
$F_1,\ldots, F_{t+7}$, by \lref{kn,n}.  For $i=1,\ldots,v-t$ and for
each edge $e'$ in $F_i$, make a new block containing $x_{t+i}$ and the
end vertices of $e'$. Assign this block a weight equal to the value
that $e'$ received in the $2$-null $1$-factorization. By this process
we obtain a zero-sum $k$-flow for the $\STS(2v+7)$ formed
by Construction~B.
\end{proof}

\begin{rem}
If $v=9$ and there exists an $\STS(9)$ with a zero-sum $3$-flow, then
we are not able to find a zero-sum $3$-flow for the $\STS(25)$ obtained
by Construction B. This is because, in Remark $\ref{decompose}$ we utilised
a weight of $-3$ in the case when $t=1$.
Note that in this case, we can find a zero-sum $4$-flow for the constructed
$\STS(25)$. However, in \cite{Dr.Akbari} it was proved that for every
pair $(v, \lambda)$ such that a $\TS(v, \lambda)$ exists, there is one
with a zero-sum $3$-flow, except when 
$(v, \lambda) \in \{(3,1), (4,2), (6, 2), (7, 1)\}$.
\end{rem}

It would be interesting to know if the restriction to $v\equiv1~(\mathrm{mod}~4)$ is 
really needed in \tref{2v+7}.

\begin{que}
Let $v,k$ be positive integers such that $v\equiv3~(\mathrm{mod}~4)$
and $k\ge2$.  Suppose that in Construction B we use an $\STS(v)$ that
has a zero-sum $k$-flow.  Is there necessarily a zero-sum $k$-flow for
the resulting $\STS(2v+7)$?
\end{que}

\section{Flows in cyclic STS}\label{seccyclic}

In this section we are going to verify that for $v>7$ each cyclic
$\STS(v)$ has a zero-sum $4$-flow and that many such systems have
a zero-sum $3$-flow. First we need some definitions.

An \textit{automorphism} of a $t$-$(v,k,\lambda)$ design, $(X,
\mathcal{B})$, is a bijection $\alpha : X \longrightarrow X$ such that
$B =\{x_1,\ldots, x_k\} \in \mathcal{B}$ if and only if 
$B\alpha =\{x_1\alpha, x_2\alpha,\ldots, x_k\alpha\} \in \mathcal{B}$. A
$t$-$(v,k,\lambda)$ design is called \textit{cyclic} if it has an
automorphism that is a permutation consisting of a single cycle of
length $v$; this automorphism is called a \textit{cyclic
  automorphism}.  Throughout, we will assume for our cyclic
$t$-$(v,k,\lambda)$ design that $X = \mathbb{Z}_v$, and $\alpha : i
\longrightarrow i+1 ~(\mathrm{mod}~v)$ is its cyclic automorphism. The
blocks of a cyclic $t$-$(v,k,\lambda)$ design are partitioned into
orbits under the action of the cyclic group generated by
$\alpha$. Each orbit of blocks is completely determined by any of its
blocks, and $\mathcal{B}$ is determined by a collection of blocks
called \textit{base blocks} (sometimes also called \textit{starter blocks} 
or \textit{initial blocks}) containing one block from each
orbit.  For an example, $X = \{1, 2, 3, 4, 5, 6, 7\}$ and
$$\mathcal{B} = \big\{ \{1, 2, 4\}, \{2, 3, 5\}, \{3, 4, 6\}, \{4, 5, 7\}, \{5, 6, 1\}, \{6, 7, 2\}, \{7, 1, 3\} \big\},$$
form an $\STS(7)$ which is cyclic, since the
permutation $\alpha = (1234567)$ is an automorphism. 

\bigskip

In 1939, Rose Peltesohn solved both of Heffter's Difference Problems,
see \cite{cyclicsts}. This solution provides the following theorem,
see \cite[Section 1.7]{ed2}.

\begin{thm}
For all $v\equiv 1~\text{or}~3~(\mathrm{mod}~6)$ with $v \neq 9$, there
exists a cyclic $\STS(v)$.
\end{thm}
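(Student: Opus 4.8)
The plan is to reduce the existence of a cyclic $\STS(v)$ to the two \emph{difference problems} of Heffter and then to exhibit explicit solutions of those problems; this is the route taken by Peltesohn. First I would fix the cyclic model $X=\mathbb{Z}_v$ with cyclic automorphism $\alpha\colon i\mapsto i+1$, and for a triple $B=\{a,b,c\}$ let $\partial B=\{\pm(b-a),\pm(c-b),\pm(a-c)\}$ be its list of differences in $\mathbb{Z}_v\setminus\{0\}$. A triple is fixed by a nontrivial power of $\alpha$ precisely when $3\mid v$ and $B$ is a translate of the short block $S=\{0,v/3,2v/3\}$, in which case its orbit has length $v/3$; all other orbits have length $v$. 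A straightforward count of pairs then shows that a cyclic $\STS(v)$ is the same thing as a set $\mathcal{F}$ of base blocks, one from each orbit, for which the multiset $\bigcup_{B\in\mathcal{F}}\partial B$ is exactly $\mathbb{Z}_v\setminus\{0\}$, each nonzero residue occurring once. Since a full base block contributes three distinct $\pm$-pairs of differences and $S$ contributes only the single pair $\pm(v/3)$, this pins down the number and the shape of the base blocks.

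Next I would split on the residue of $v$ modulo $6$. If $v=6t+1$, every orbit is full, so $\mathcal{F}$ consists of $t$ full base blocks; choosing a positive representative of each $\pm$-pair, and using that a positive triple $\{x,y,z\}$ with entries at most $3t$ is realizable as a base block exactly when it can be labelled so that $x+y=z$ or so that $x+y+z=6t+1$, the problem becomes: partition $\{1,2,\dots,3t\}$ into $t$ such triples. This is Heffter's first difference problem. If $v=6t+3$, then the $3t+1$ difference-pairs cannot all be grouped into triples, so the short orbit is forced; $S=\{0,2t+1,4t+2\}$ accounts for the pair $\pm(2t+1)$, and the remaining $t$ full base blocks must realize a partition of $\{1,2,\dots,3t+1\}\setminus\{2t+1\}$ into $t$ triples each summing to $0\pmod{6t+3}$ (none of which can coincide with $S$ since the difference $2t+1$ has been removed). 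This is Heffter's second difference problem. Conversely, any solution of the pertinent difference problem, together with $S$ when $v\equiv3\pmod6$, reassembles into a cyclic $\STS(v)$.

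It then remains to solve the two difference problems for all $t\ge1$, and this is where essentially all the work lies: it is Peltesohn's theorem. The method is explicit and constructive. One breaks the parameter $t$ into a bounded list of residue classes modulo a small integer (typically by $t\bmod 4$, with a finer subdivision in the second problem), and for each class writes down the required $t$ triples as a few arithmetic progressions, or short linear expressions in a running index, and then checks directly that (i) these triples are pairwise disjoint and together exhaust the prescribed set, and (ii) each satisfies the required additive relation modulo $v$. The handful of small values of $t$ missed by the generic formulas are handled by hand. In particular, $v=9$ is the case $t=1$ of the second problem, where one would need a single triple contained in $\{1,2,4\}$ that can be labelled with $x+y=z$ or with $x+y+z\equiv0\pmod9$; no such triple exists, which yields the stated exception, and one checks it is the only one.

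I expect the main obstacle to be exactly this last step: producing difference families that behave uniformly across all residue classes of $t$. The natural periodic patterns always break down for a small number of boundary values of $t$, so the argument needs a careful case split together with ad hoc constructions for the exceptional small orders, plus the verification that $v=9$ is genuinely the sole order admitting no cyclic $\STS(v)$. This bookkeeping, rather than any single conceptual difficulty, is what makes the theorem (and Peltesohn's 1939 proof) nontrivial.
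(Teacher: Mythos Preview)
Your outline is a faithful sketch of Peltesohn's 1939 argument, and that is exactly what the paper invokes: the theorem is stated without proof and attributed to Peltesohn's solution of Heffter's two difference problems, with a pointer to \cite{cyclicsts} and \cite[Section~1.7]{ed2}. So you and the paper take the same route; the only difference is that the paper treats this as a background citation rather than re-deriving it.

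One small slip worth noting: in your description of the second Heffter problem you wrote that the remaining triples must each sum to $0\pmod{6t+3}$, but (as you correctly stated for the first problem) the admissible triples are those that can be labelled with $x+y=z$ \emph{or} $x+y+z\equiv 0\pmod{6t+3}$; both alternatives are needed.
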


\begin{rem}
If $v\equiv 1~(\mathrm{mod}~6)$, every cyclic $\STS(v)$ has
$\frac{v-1}{6}$ full orbits. Also, if $v\equiv 3~(\mathrm{mod}~6)$,
every cyclic $\STS(v)$ has $\frac{v-3}{6}$ full orbits and one short
orbit which contains the block $\{0, \frac{v}{3}, \frac{2v}{3}\}$.
Moreover, note that every full orbit contains each point $3$ times,
and each point appears once in the short orbit, see \cite{triplesystem}.
\end{rem}

For $v\equiv 3~(\mathrm{mod}~6)$, we will classify orbits of a cyclic $\STS(v)$
into three types. For $i=1,2,3$ an orbit is of Type $i$ if every block
in the orbit contains representatives of precisely $i$ different
congruence classes modulo $3$. As $v$ is divisible by $3$, every orbit
will be of Type $1$, Type $2$ or Type $3$ and its type can be established
by examining any single block in the orbit.

Since the incidence matrix of $\STS(7)$ has full rank, $\STS(7)$ has no zero-sum $k$-flow. Also, by \cite[Section 1.7]{ed2}, there is no
cyclic $\STS(9)$.  In the following we are going to show that every
cyclic $\STS(v)$ for $v>7$ admits a zero-sum $k$-flow for $k=3$ or $k=4$.

We will split the $v\equiv 3~(\mathrm{mod}~6)$ case into three subcases:
$v\equiv 3, 9 ~\text{or}~15~(\mathrm{mod}~18)$. In the following we
prove that if $v\equiv 1~(\mathrm{mod}~6)$ or $v\equiv 9
~(\mathrm{mod}~18)$ and $v\neq 7$, then each cyclic $\STS(v)$ admits a
zero-sum $3$-flow. In other words, Conjecture \ref{conj2} is true for
these families of Steiner triple systems. Also, we show that for
$v\equiv 3 ~\text{or}~ 15~(\mathrm{mod}~18)$, each cyclic $\STS(v)$
has a zero-sum $4$-flow. We need the following lemmas to prove
our main results.

\begin{lem}\label{v9}
For $v\equiv 9~(\mathrm{mod}~18)$, every cyclic $\STS(v)$ has a full
orbit of Type $3$.
\end{lem}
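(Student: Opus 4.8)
The plan is to count the full orbits of the cyclic $\STS(v)$ by type, and to cross-reference this with a count of how many of the generating differences are divisible by $3$. Throughout, take the point set to be $\mathbb{Z}_v$ with $v=18m+9$; since there is no cyclic $\STS(9)$ we may assume $m\ge1$. Partition $\mathbb{Z}_v$ into its three residue classes modulo $3$, each of size $v/3$. A full orbit is determined by a base block $\{0,a,b\}$ and ``uses'' the three difference classes $\{a\}$, $\{b\}$, $\{a-b\}$, where we identify $d$ with $v-d$; validity of the design means that, as the base block ranges over all full orbits, these difference classes run through $\{1,\dots,(v-1)/2\}\setminus\{v/3\}$ exactly once, so there are $(v-3)/6=3m+1$ full orbits.

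The first step is to establish a dictionary between the type of a base block $\{0,a,b\}$ and the number of the three differences $a$, $b$, $a-b$ that are divisible by $3$. Reducing modulo $3$: a Type $1$ base block (all three points $\equiv0$) has all of $a$, $b$, $a-b$ divisible by $3$; a Type $2$ base block (exactly two points congruent) has exactly one of $a$, $b$, $a-b$ divisible by $3$; and a Type $3$ base block (three distinct residues) has none of them divisible by $3$. Checking these three cases is routine (for Type $2$, whichever two of the three points are congruent, their difference is the unique one vanishing mod $3$). Since the three differences of a full-orbit base block are distinct, the number of difference classes it contributes that are $\equiv0\pmod3$ equals this count. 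I would also record here that the short orbit $\{0,v/3,2v/3\}$ is of Type $1$, because $v\equiv9\pmod{18}$ forces $v/3\equiv0\pmod3$; hence the short orbit accounts for exactly one difference class divisible by $3$, namely $v/3$ itself.

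The second step is a short enumeration. The number of elements of $\{1,\dots,(v-1)/2\}$ divisible by $3$ is $\lfloor(v-1)/6\rfloor=3m+1$; discarding $v/3$, which is used by the short orbit, leaves exactly $3m$ difference classes divisible by $3$ to be distributed among the full orbits. Writing $n_i$ for the number of full orbits of Type $i$, the dictionary gives $3n_1+n_2=3m$, while counting full orbits gives $n_1+n_2+n_3=3m+1$. Subtracting, $n_3=2n_1+1\ge1$, so a full orbit of Type $3$ exists.

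The only genuine work is the short case check underpinning the type/divisibility dictionary and the arithmetic pinning down how many multiples of $3$ lie in $\{1,\dots,(v-1)/2\}$ together with the position of $v/3$; I do not anticipate any obstacle beyond keeping this bookkeeping straight. It is worth emphasising that the hypothesis $v\equiv9\pmod{18}$ (as opposed to merely $v\equiv3\pmod6$) enters only through $v/3\equiv0\pmod3$, which is exactly what makes the two counts differ by one. When $v\equiv3$ or $15\pmod{18}$ instead, $v/3$ is not a multiple of $3$, the short orbit turns out to be of Type $3$, the two counts agree, and the same computation yields only $n_3=2n_1$, so no Type $3$ full orbit is forced --- consistent with the paper treating those residues separately and settling there for a $4$-flow.
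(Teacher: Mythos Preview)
Your argument is correct and is essentially the same counting-mod-$3$ idea as the paper's, with a cosmetic difference in what gets counted. The paper assumes no Type~$3$ full orbit and counts pairs $\{a,b\}$ lying in different residue classes mod~$3$ over all blocks, obtaining $2vt=3(v/3)^2$ and hence $t=v/6$, a contradiction. You instead count difference classes divisible by~$3$ via the Heffter/difference-family description of a cyclic STS; since a pair lies in the same residue class iff its difference is $\equiv0\pmod3$, this is exactly the complementary count. Your version has the mild bonus of yielding the exact relation $n_3=2n_1+1$, not merely $n_3\ge1$, and your closing remark about $v\equiv3,15\pmod{18}$ recovers precisely the content of the paper's next lemma.
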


\begin{proof}
Suppose that there exists a cyclic $\STS(v)$, $S$, with no full orbit of Type $3$. Let $S$ have $t$ full orbits of Type $2$ and $s$ full orbits of Type $1$. Note that $t$ and $s$ are two non-negative integers and $t+s=({v-3})/{6}$.
Now, count the number of pairs $\{a, b\}$ where $a\not\equiv b~(\mathrm{mod}~3)$, among all blocks of $S$. Since the short orbit has Type $1$, and every full orbit has $v$ blocks, we obtain the following equality:
$$2 v t =3\frac{v}{3} \times\frac{v}{3}.$$
Hence $t={v}/{6}$, a contradiction.
\end{proof}

\begin{lem}\label{v3,15}
  Let $v\equiv 3 ~\text{or}~ 15~(\mathrm{mod}~18)$ and $S$ be a cyclic
  $\STS(v)$ with no full orbit of Type $3$. Then $S$ has no full orbit
  of Type $1$.
\end{lem}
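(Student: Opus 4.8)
The plan is to adapt the double-counting argument from the proof of \lref{v9}, being careful that for $v\equiv 3$ or $15~(\mathrm{mod}~18)$ the short orbit behaves differently from the $v\equiv 9~(\mathrm{mod}~18)$ case. First I would pin down the type of the short orbit. It contains the block $\{0,v/3,2v/3\}$, and $v\equiv 3$ or $15~(\mathrm{mod}~18)$ is precisely the statement that $v\equiv3~(\mathrm{mod}~6)$ with $9\nmid v$; hence $v/3$ is not divisible by $3$, so $0$, $v/3$ and $2v/3$ represent three distinct congruence classes modulo $3$. Therefore the short orbit is of Type $3$ (whereas when $v\equiv 9~(\mathrm{mod}~18)$ we have $9\mid v$ and the short orbit is of Type $1$, which is what makes that case behave differently).

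Next I would count, in two ways, the unordered pairs $\{a,b\}$ of points of $S$ with $a\not\equiv b~(\mathrm{mod}~3)$. Splitting $\mathbb{Z}_v$ into its three residue classes modulo $3$, each of size $v/3$, there are $3(v/3)^2=v^2/3$ such ``cross'' pairs, and each lies in exactly one block of $S$. A block of Type $1$ contains no cross pair, a block of Type $2$ contains exactly two, and a block of Type $3$ contains exactly three. Let $t$ and $s$ be the numbers of full orbits of $S$ of Type $2$ and of Type $1$ respectively; since $S$ has no full orbit of Type $3$, $t+s=(v-3)/6$. Each full orbit contains $v$ blocks while the short orbit contains $v/3$ blocks and is of Type $3$, so summing the contributions over all blocks gives
$$2vt+3\cdot\frac{v}{3}=\frac{v^2}{3},$$
whence $2vt=v(v-3)/3$ and so $t=(v-3)/6$.

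Finally, combining $t=(v-3)/6$ with $t+s=(v-3)/6$ forces $s=0$, which is exactly the assertion that $S$ has no full orbit of Type $1$. I do not expect a genuine obstacle here: the only point needing care is the determination of the type of the short orbit, after which the count is routine and runs parallel to \lref{v9}. (Indeed, running the same count with a Type $1$ short orbit would give $2vt=v^2/3$, i.e.\ $t=v/6$, which is impossible for integral $t$ since $v$ is odd — and that impossibility is precisely what yields \lref{v9}.)
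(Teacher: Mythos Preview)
Your proof is correct and follows exactly the same double-counting argument as the paper: identify the short orbit as Type~$3$ (since $3\nmid v/3$ when $v\equiv3$ or $15\pmod{18}$), count cross pairs to obtain $2vt+3\cdot\frac{v}{3}=3\cdot\frac{v}{3}\cdot\frac{v}{3}$, and deduce $t=(v-3)/6$, hence $s=0$. Your write-up is in fact a little more explicit than the paper's about why each block type contributes what it does, but the method is identical.
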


\begin{proof}
Suppose $S$ has $t$ full orbits of Type $2$ and $s$ full orbits of
Type $1$. We have $t+s=({v-3})/{6}$.  Since $v/3$ is not divisible by
$3$, the short orbit has Type $3$.  Now, count the number of pairs
$\{a, b\}$ in all blocks of $S$, where $a\not\equiv b~(\mathrm{mod}~3)$.
We have
$$2tv+3\frac{v}{3}=3\frac{v}{3}\times\frac{v}{3}.$$
Hence, $t=({v-3})/{6}$ and $s=0$.
\end{proof}

\begin{rem}\label{mod3} 
Let $v\equiv 9~(\mathrm{mod}~18)$, and suppose that a cyclic $\STS(v)$
has a full orbit of Type $3$ generated from a base block
$\{a,b,c\}$. Then the blocks $\{a+3i, b+3i, c+3i\}$ for $0\leq i\leq
\frac{v}{3}-1$, contain exactly one occurrence of each point in
$\mathbb{Z}_v$. This is because $\{a+3i:0\leq i\leq \frac{v}{3}-1\}$
contains the $v/3$ points that are congruent to $a\pmod 3$. Similar
statements holds for $\{b+3i\}$ and $\{c+3i\}$, and these sets are
disjoint because the orbit is of Type $3$.
\end{rem}

Using Lemmas \ref{v9} and \ref{v3,15}, and Remark \ref{mod3}, we have
the following theorems about the existence of a zero-sum $k$-flow with
$k=3$ or $k=4$, for every cyclic $\STS(v)$.

\begin{thm}\label{zs3}
Every cyclic $\STS(v)$ for $v\equiv 1~(\mathrm{mod}~6)$ or $v\equiv 9 ~(\mathrm{mod}~18)$ with $v\neq 7$ admits a zero-sum $3$-flow.
\end{thm}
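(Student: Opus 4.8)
The plan is to handle the two congruence classes separately, exploiting the orbit structure of a cyclic $\STS(v)$. Recall that assigning a common weight $f_O$ to every block in a full orbit $O$ contributes $3f_O$ to the weight $w(x)$ of every point $x$, since each full orbit meets each point exactly three times; a short orbit contributes $f_O$ to each point. So a constant-on-orbits flow gives $w(x) = 3\sum_{O\text{ full}}f_O + (\text{short orbit terms})$, which is the same at every point. Thus for $v\equiv1\pmod 6$ the system is \emph{regular} in the sense that all orbits are full, and any orbit-constant assignment yields equal weights everywhere; the task reduces to choosing nonzero values $f_O\in\{\pm1,\pm2\}$ on the $(v-1)/6$ full orbits summing to zero. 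If $(v-1)/6$ is even this is trivial (pair up $+1$'s and $-1$'s), and if it is odd one can use a single $+2$ against $(v-1)/6$ values among $\pm1$ whose total is $-2$; one only needs $(v-1)/6\ge 2$, i.e. $v\ge13$, and $v=7$ is excluded, leaving possibly $v=7$ which is the sole exclusion — exactly matching the hypothesis $v\neq7$.

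For $v\equiv9\pmod{18}$ the short orbit is present and has Type $1$ (it is $\{0,v/3,2v/3\}$, all in the same class mod $3$). An orbit-constant flow here gives $w(x)=3\sum_{O\text{ full}}f_O + f_{\mathrm{short}}$, again equal at every point, but now we cannot in general kill it with $\pm1,\pm2$ on the full orbits plus a $\pm1$ on the short orbit unless $3\mid(\text{something})$, and parity/divisibility may obstruct a purely orbit-constant solution. This is where \lref{v9} enters: every cyclic $\STS(v)$ with $v\equiv9\pmod{18}$ has a full orbit $O_0$ of Type $3$. By Remark~\ref{mod3}, the sub-collection $\{\,\{a+3i,b+3i,c+3i\}: 0\le i\le v/3-1\,\}$ of $O_0$ hits every point of $\mathbb{Z}_v$ exactly once, and the complementary $2v/3$ blocks of $O_0$ (obtained by shifting the base block by $1$ and by $2$ modulo $3$) also form a structure that is regular on the points in a controlled way — in fact splitting $O_0$ into the three cosets-mod-$3$ shifts $\{0\},\{1\},\{2\}$ gives three sub-families, each a perfect matching on $\mathbb{Z}_v$ (one point per block). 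So within $O_0$ alone we can assign, say, $+1$ to the first third, $-1$ to the second third, and $0$ is not allowed — instead assign $+2,-1,-1$ to the three thirds: each point lies in exactly one block from each third, contributing $2-1-1=0$. That single orbit, broken into three "null sub-orbits", contributes zero to every point's weight regardless of what the other orbits do.

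With $O_0$ neutralised, put $f=1$ on all remaining full orbits and deal with the short orbit and the overall constant. After using $O_0$ as above, the remaining full orbits number $(v-3)/6 - 1$ and the short orbit is still there; an orbit-constant assignment on those gives $w(x)=3\sum f_O + f_{\mathrm{short}}$, a single number $c$ to be made zero with $f_O\in\{\pm1,\pm2\}$ and $f_{\mathrm{short}}\in\{\pm1,\pm2\}$. Choosing $f_{\mathrm{short}}\in\{-3? \text{no}\}$ is not allowed for a $3$-flow, so one instead arranges the full-orbit sum itself to be $0$ and takes a small adjustment: set $f_{\mathrm{short}}=\pm1$ and then because we still have the flexibility inside $O_0$, replace the $+2,-1,-1$ pattern there by $+2,-1,-1$ shifted/scaled — more carefully, re-split $O_0$ and assign $a,b,c$ to its thirds with $a+b+c$ an arbitrary value in $\{0,\pm1,\pm2,\pm3,\pm4,\pm6\}\cap(\text{achievable with }\pm1,\pm2)$, noting each point gets $a+b+c$ from $O_0$; then choose $a+b+c=-f_{\mathrm{short}}-3\sum_{\text{other full}}f_O$ with the other full-orbit values chosen freely in $\pm1,\pm2$ to make the right side land in the achievable set (it can always be forced into, e.g., $\{0,-1,-2\}$, each realisable: $0=1-1+0$? not allowed, so $0=2-1-1$, $-1=-1-1+1$, $-2=-1-1+2$ is $0$, use $-2=-2+1-1$). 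The main obstacle is this bookkeeping at the very end — certifying that the residual constant $c$ can always be absorbed, i.e. that the divisibility by $3$ coming from full orbits never leaves an unreachable target — and checking the small cases $v=9$ (no cyclic $\STS(9)$, so excluded) and the smallest genuine case $v=27$ by hand. Once the Type-$3$ orbit is identified via \lref{v9}, everything else is elementary counting, so I expect no conceptual difficulty, only careful case analysis on the parity of the orbit counts.
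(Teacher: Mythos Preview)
Your approach is essentially the paper's, and the case $v\equiv1\pmod6$ is complete (the paper simply cites \cite[Theorem~1.7]{Dr.Akbari} here, so your direct argument is a fine substitute). The $v\equiv9\pmod{18}$ case, however, is not finished, and your first attempt actually fails for the reason you half-noticed.

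If you assign $+2,-1,-1$ to the three thirds of the Type~$3$ orbit $O_0$, then $O_0$ contributes $0$ to every point, and you are left with
\[
w(x)=3\!\!\sum_{\text{other full}}\!\!f_O+f_{\mathrm{short}}.
\]
Since $f_{\mathrm{short}}\in\{\pm1,\pm2\}$ is never divisible by $3$, this can \emph{never} vanish. Your recovery---letting the three thirds carry values $a,b,c$ with adjustable sum---is the right move, but you then wander through an unnecessary case analysis and leave it as ``bookkeeping at the very end''. The clean choice (and the paper's) is simply $a,b,c=-1,+1,+1$, so that $O_0$ contributes exactly $+1$ to every point. Now
\[
w(x)=1+3\!\!\sum_{\text{other full}}\!\!f_O+f_{\mathrm{short}},
\]
and with $(v-3)/6-1=3k$ remaining full orbits weighted alternately $\pm1$, their sum $S$ is $0$ when $k$ is even and $-1$ when $k$ is odd; take $f_{\mathrm{short}}=-1$ in the first case and $f_{\mathrm{short}}=+2$ in the second. (Equivalently: $(v-3)/6$ odd gives $f_{\mathrm{short}}=-1$, even gives $f_{\mathrm{short}}=+2$.) No case analysis on achievable triples $a+b+c$, no hand-check of $v=27$, and no weight outside $\{\pm1,\pm2\}$ is ever needed. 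The only missing idea in your write-up was committing to $a+b+c=1$ rather than $0$.
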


\begin{proof}
There is no cyclic $\STS(9)$, so $v> 9$ and we have at least two full orbits. 
The case when $v\equiv 1~(\mathrm{mod}~6)$ is handled by 
\cite[Theorem 1.7]{Dr.Akbari}, so we assume that 
$v\equiv 9 ~(\mathrm{mod}~18)$. In this case,
by \lref{v9}, there exists a full orbit with a block $\{a, b, c\}$
congruent to $\{0, 1, 2\}~(\mathrm{mod}~3)$. So, assign the weight of
all blocks within a full orbit of Type $3$ as follows:
$$-1, +1, +1, -1, +1, +1, -1, +1, +1,\ldots.$$ Note that by Remark
\ref{mod3}, each point gets weight $+1$ along this orbit. Now, if
$O_2, O_3,\ldots, O_{\frac{v-3}{6}}$ are the other full orbits, assign
weight $(-1)^{i+1}$ to every block $O_i$, for $2\leq i \leq
\frac{v-3}{6}$.  If $~\frac{v-3}{6}~$ is odd, assign weight $-1$ to
the blocks in the short orbit. Otherwise, assign value $2$ to the
blocks in the short orbit.
\end{proof}

\medskip

For the cases not covered by \tref{zs3}, we have the following result.

\begin{thm}\label{remcases}
  Suppose that $S$ is a cyclic $\STS(v)$, where $v\equiv
  3~\text{or}~15~(\mathrm{mod}~18)$ and $v>3$.  Then $S$ has a
  zero-sum $4$-flow.  If $S$ has any full orbit of Type $1$ or Type
  $3$, then $S$ has a zero-sum $3$-flow.
\end{thm}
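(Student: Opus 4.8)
The plan is to build the flow from constant weights on full orbits, together with---for the $3$-flow---one non-constant weighting of a Type-$3$ full orbit. Two elementary facts underlie everything. First, giving every block of a full orbit a constant weight $\epsilon$ adds exactly $3\epsilon$ to the weight of every point (each full orbit meets each point three times), while giving every short-orbit block the constant weight $s$ adds exactly $s$ to every point (each point lies in a unique short-orbit block). Second, as in the proof of \tref{zs3}, if a Type-$3$ full orbit with base block $\{a,b,c\}$ has its $\ell$-th block weighted $-1$ when $3\mid\ell$ and $+1$ otherwise, then every point receives exactly $+1$ from that orbit, because the three blocks of the orbit through a given point have indices occupying all three residue classes modulo~$3$ (this uses only $3\mid v$; cf.\ Remark~\ref{mod3}).

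For the $4$-flow I would argue as follows. Since $v>3$ and $v\equiv3$ or $15\pmod{18}$, $S$ has $m=(v-3)/6\ge2$ full orbits. Pick nonzero integers $\epsilon_1,\ldots,\epsilon_m\in\{\pm1,\pm2,\pm3\}$ with $\epsilon_1+\cdots+\epsilon_m=1$ (for instance all equal to $\pm1$ with $(m+1)/2$ of them equal to $1$ when $m$ is odd, and $2,-1,1,-1,\ldots,1,-1$ when $m$ is even), weight the $k$-th full orbit by $\epsilon_k$, and weight every block of the short orbit by $-3$. Then every point has weight $3\cdot1+(-3)=0$, every block weight is nonzero and lies in $\{\pm1,\pm2,\pm3\}$, so this is a zero-sum $4$-flow.

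For the $3$-flow, first note that by the contrapositive of \lref{v3,15} a full orbit of Type~$1$ forces the existence of a full orbit of Type~$3$; hence, under the hypothesis that $S$ has a full orbit of Type~$1$ or Type~$3$, I may fix a Type-$3$ full orbit $O_1$. Weight $O_1$ as in the second fact above (contribution $+1$ at every point), and finish exactly as in the proof of \tref{zs3}: give the remaining $m-1$ full orbits constant weights $\pm1$ with alternating signs, so that their total contribution to each point is $0$ if $m-1$ is even and $-1$ if $m-1$ is odd, and give the short orbit weight $-1$ or $+2$ accordingly. Every point then has weight $1+0-1=0$ or $1-3+2=0$, all block weights lie in $\{-1,1,2\}$, and the result is a zero-sum $3$-flow.

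I do not expect a genuinely hard step. The two things to notice are, first, that permitting full-orbit weights as large as $3$ makes the $4$-flow immediate: with constant weights on \emph{every} orbit the weight at each point is $s+3n$, where $s$ is the short-orbit weight and $n\in\mathbb{Z}$, so no zero-sum $3$-flow can be constant (that would force $s=-3n$ with $|s|\le2$) and a non-constant device is unavoidable; and, second, that the Type-$1$/Type-$3$ hypothesis, via \lref{v3,15}, always supplies a Type-$3$ full orbit, which is precisely what the $\pm1$-weighting used in \tref{zs3} needs in order to beat the divisibility-by-$3$ obstruction. The only mildly fiddly part, the residue bookkeeping behind the second fact, is already carried out in Remark~\ref{mod3}.
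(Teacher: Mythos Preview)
Your argument is correct and matches the paper's proof essentially line for line: short orbit weighted $-3$, one full orbit weighted $2$ or $1$ according to the parity of $m$, the rest alternating $\pm1$ for the $4$-flow; and for the $3$-flow, reduce Type~$1$ to Type~$3$ via \lref{v3,15} and then repeat the \tref{zs3} device. One small slip: where you say the remaining $m-1$ full orbits contribute ``$-1$'' to each point when $m-1$ is odd, you mean that the \emph{sum of the $\epsilon_i$} is $-1$, hence the contribution to each point is $-3$; your final check $1-3+2=0$ uses the correct value, so this is only a wording issue.
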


\begin{proof}
We first show that $S$ admits a zero-sum $4$-flow.  Assign value $-3$
to the blocks in the short orbit. For the first full orbit, assign a
value of $2$ if there are an even number of full orbits, and a value
of $1$ otherwise. For the other full orbits, alternate between
assigning $-1$ and $1$ to the orbit. This produces a zero-sum $4$-flow
for $S$.  If $S$ has a full orbit of Type $3$, then similar to the proof
of \tref{zs3}, there exists a zero-sum $3$-flow for $S$. By
\lref{v3,15}, we know that if some full orbit has Type $1$ then there
will be a full orbit of Type $3$, so we are also done in that case.
\end{proof}

\begin{cor}
Every cyclic $\STS(v)$ with $v>7$ admits a zero-sum $4$-flow.
\end{cor}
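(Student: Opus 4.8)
The plan is to derive this corollary directly from the two preceding theorems by going through the possible residues of $v$ modulo $6$ and then modulo $18$. First I would recall that for a Steiner triple system to exist we need $v\equiv1$ or $3\pmod 6$, and that a \emph{cyclic} $\STS(v)$ exists for all such $v$ except $v=9$; moreover $\STS(7)$ is the unique $\STS(v)$ for which the incidence matrix has full column rank, which is why $v>7$ is imposed. So the corollary is really a statement about all $v\in\{13,15,19,21,25,27,\dots\}$ together with $v=9$ (for which no cyclic system exists, so there is nothing to prove). For these $v$ we always have at least two full orbits, which is what the alternating-sign arguments in the earlier proofs require.

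Next I would split into the two cases according to $v\bmod 6$. If $v\equiv1\pmod 6$ and $v\neq7$, then \tref{zs3} gives a zero-sum $3$-flow, and since $\{\pm1,\pm2\}\subseteq\{\pm1,\pm2,\pm3\}$, a zero-sum $3$-flow is in particular a zero-sum $4$-flow; so we are done. If instead $v\equiv3\pmod 6$, I would further split according to $v\bmod 18$, i.e.\ into $v\equiv3,9,15\pmod{18}$. For $v\equiv9\pmod{18}$ with $v>9$, \tref{zs3} again supplies a zero-sum $3$-flow, hence a zero-sum $4$-flow. For $v\equiv3$ or $15\pmod{18}$ with $v>3$, \tref{remcases} directly asserts the existence of a zero-sum $4$-flow. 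Since $v>7$ and $v\equiv3\pmod 6$ forces $v\ge9>3$, this covers everything in that branch.

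Assembling the cases: every cyclic $\STS(v)$ with $v>7$ has $v\equiv1$ or $3\pmod6$; if $v\equiv1\pmod6$ or $v\equiv9\pmod{18}$ we invoke \tref{zs3} (noting $v\ne7$, and that there is no cyclic $\STS(9)$ so the residue $9\pmod{18}$ case has $v\ge27$), obtaining a zero-sum $3$-flow and a fortiori a zero-sum $4$-flow; and if $v\equiv3$ or $15\pmod{18}$ we invoke \tref{remcases}, obtaining a zero-sum $4$-flow. In all cases a zero-sum $4$-flow exists, which is the claim.

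There is essentially no obstacle here: the corollary is a bookkeeping consequence of the two theorems, and the only points needing care are (i) checking that the residue classes $1\pmod 6$, $3\pmod{18}$, $9\pmod{18}$, $15\pmod{18}$ genuinely exhaust all admissible $v\equiv1,3\pmod6$, and (ii) observing that a zero-sum $3$-flow is automatically a zero-sum $4$-flow, so the weaker uniform conclusion ($4$-flow) holds everywhere even though some classes actually admit a $3$-flow. The excluded small cases $v\in\{3,7,9\}$ are handled by the hypothesis $v>7$ and the nonexistence of a cyclic $\STS(9)$.
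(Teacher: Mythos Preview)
Your argument is correct and follows exactly the approach the paper intends: the corollary is stated without proof in the paper, being an immediate consequence of \tref{zs3} and \tref{remcases} via the case split you describe. Your write-up simply makes explicit the bookkeeping (the residue classes modulo $6$ and $18$, the observation that a $3$-flow is a $4$-flow, and the handling of the excluded small orders) that the paper leaves implicit.
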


We stress that Theorem \ref{remcases} does not rule out the existence
of a zero-sum $3$-flow for a cyclic $\STS(v)$ that has no full orbits
of Type $1$ or $3$.  Such triple systems do exist.  For example, any
triple system built using three identical cyclic quasigroups in the
Bose Construction (\cite[Section 1.2]{ed2}), will have only full
orbits of Type $2$. We next show that such STS may still have a zero-sum
$3$-flow.  There are two cyclic $\STS(15)$. The cyclic $\STS(15)$ with
the base blocks $\{0, 1, 4\}$, $\{0, 2, 8\}$ and $\{0, 5, 10\}$ is not
obtained from the Bose construction, but the other one constructed by
the base blocks $\{0, 1, 4\}$, $\{0, 2, 9\}$ and $\{0, 5, 10\}$ arises from
the Bose construction. However, both of them admit a zero-sum $3$-flow
and the full orbits of these cyclic $\STS(15)$ are all of Type $2$.

In the following one can find a zero-sum $3$-flow for the cyclic
$\STS(15)$ with the base blocks $\{0, 1, 4\}$, $\{0, 2, 8\}$ and 
$\{0,5, 10\}$.  The fourth number (after each block) is the flow value
assigned to that block. We omit the $\{$~$\}$ symbols in each block.
\begin{equation*}
\begin{array}{c@{\hskip2mm}c@{\hskip2mm}c@{\hskip5mm}r@{\hskip1cm}c@{\hskip2mm}c@{\hskip2mm}c@{\hskip5mm}r@{\hskip1cm}c@{\hskip2mm}c@{\hskip2mm}c@{\hskip5mm}r}
0&1&4&\textcolor{red}{-1}&0&2&8&\textcolor{red}{1}&0&5&10&\textcolor{red}{2}\\
1&2&5&\textcolor{red}{-1}&1&3&9&\textcolor{red}{1}&1&6&11&\textcolor{red}{2}\\
2&3&6&\textcolor{red}{1}&2&4&10&\textcolor{red}{-1}&2&7&12&\textcolor{red}{2}\\
3&4&7&\textcolor{red}{-1}&3&5&11&\textcolor{red}{-1}&3&8&13&\textcolor{red}{2}\\
4&5&8&\textcolor{red}{1}&4&6&12&\textcolor{red}{-1}&4&9&14&\textcolor{red}{2}\\
5&6&9&\textcolor{red}{-1}&5&7&13&\textcolor{red}{-1}\\
6&7&10&\textcolor{red}{1}&6&8&14&\textcolor{red}{-1}\\
7&8&11&\textcolor{red}{-1}&7&9&0&\textcolor{red}{1}\\
8&9&12&\textcolor{red}{-1}&8&10&1&\textcolor{red}{-1}\\
9&10&13&\textcolor{red}{-1}&9&11&2&\textcolor{red}{-1}\\
10&11&14&\textcolor{red}{1}&10&12&3&\textcolor{red}{-1}\\
11&12&0&\textcolor{red}{-1}&11&13&4&\textcolor{red}{1}\\
12&13&1&\textcolor{red}{1}&12&14&5&\textcolor{red}{1}\\
13&14&2&\textcolor{red}{-1}&13&0&6&\textcolor{red}{-1}\\
14&0&3&\textcolor{red}{-1}&14&1&7&\textcolor{red}{-1}\\
\end{array}
\end{equation*}
Also, a cyclic $\STS(15)$ with the base
blocks $\{0, 1, 4\}$, $\{0, 2, 9\}$ and $\{0, 5, 10\}$ has a zero-sum $3$-flow as follows:
\begin{equation*}
\begin{array}{c@{\hskip2mm}c@{\hskip2mm}c@{\hskip5mm}r@{\hskip1cm}c@{\hskip2mm}c@{\hskip2mm}c@{\hskip5mm}r@{\hskip1cm}c@{\hskip2mm}c@{\hskip2mm}c@{\hskip5mm}r}
0&1&4&\textcolor{red}{1}&0&2&9&\textcolor{red}{-1}&0&5&10&\textcolor{red}{1}\\
1&2&5&\textcolor{red}{-2}&1&3&10&\textcolor{red}{-2}&1&6&11&\textcolor{red}{1}\\
2&3&6&\textcolor{red}{1}&2&4&11&\textcolor{red}{2}&2&7&12&\textcolor{red}{1}\\
3&4&7&\textcolor{red}{-2}&3&5&12&\textcolor{red}{1}&3&8&13&\textcolor{red}{1}\\
4&5&8&\textcolor{red}{-1}&4&6&13&\textcolor{red}{2}&4&9&14&\textcolor{red}{-1}\\
5&6&9&\textcolor{red}{-2}&5&7&14&\textcolor{red}{2}\\
6&7&10&\textcolor{red}{1}&6&8&0&\textcolor{red}{-2}\\
7&8&11&\textcolor{red}{-2}&7&9&1&\textcolor{red}{2}\\
8&9&12&\textcolor{red}{1}&8&10&2&\textcolor{red}{1}\\
9&10&13&\textcolor{red}{2}&9&11&3&\textcolor{red}{-1}\\
10&11&14&\textcolor{red}{-2}&10&12&4&\textcolor{red}{-1}\\
11&12&0&\textcolor{red}{1}&11&13&5&\textcolor{red}{1}\\
12&13&1&\textcolor{red}{-2}&12&14&6&\textcolor{red}{-1}\\
13&14&2&\textcolor{red}{-2}&13&0&7&\textcolor{red}{-2}\\
14&0&3&\textcolor{red}{2}&14&1&8&\textcolor{red}{2}\\
\end{array}
\end{equation*}

\section{Steiner Quadruple Systems}

In this section we study zero-sum $k$-flows in Steiner quadruple
systems (SQS). For $k\ge3$ we show the following results.  If we have a
zero-sum $k$-flow for two $\SQS(v)$, then we can find a zero-sum
$k$-flow for an $\SQS(2v)$. Also, if there are an $\SQS(u)$ and an
$\SQS(v)$ both with a zero-sum $k$-flow, then we can find a zero-sum
$k$-flow for an $\SQS(uv)$.

First we recall some definitions and background about Steiner
quadruple systems from \cite{surveysqs} and \cite{survey2}. A
\textit{Steiner quadruple system} (or simply a quadruple system) is a
pair $(X, \mathcal{B})$ which is a $3$-design with parameters $(v, 4,
1)$ such that any $3$-subset of $X$ belongs to exactly one block of
$\mathcal{B}$. A Steiner quadruple system of order $v$ is denoted by
$\SQS(v)$.  One obtains immediately that $v\equiv 2$ or $4
~(\mathrm{mod}~6)$ is a necessary condition for the existence of an
$\SQS(v)$. The total number of quadruples is
$\frac{1}{24}v(v-1)(v-2)$, the number of quadruples containing a given
element is $\frac{1}{6}(v-1)(v-2)$, and the number of quadruples
containing a given pair of elements is $\frac{1}{2}(v-2)$.  In 1960,
Hanani \cite{Hanani2} proved that the set of possible orders for
quadruple systems consists of all positive integers $v \equiv 2$ or
$4~(\mathrm{mod}~6)$. If $(X, \mathcal{B})$ is a quadruple system and
$x$ is any element in $X$, put $X_{x}=X\setminus \{x\}$ and
$\mathcal{B}(x)=\{B\setminus \{x\}: B\in \mathcal{B}, x\in B\}$. It
can be easily checked that $(X_{x}, \mathcal{B}(x))$ is a Steiner
triple system which is called a \textit{derived triple system} of the
quadruple system $(X, \mathcal{B})$.

We now recall two recursive constructions of $\SQS(2v)$
and $\SQS(uv)$ from \cite{surveysqs}.

\bigskip\noindent
\textbf{Construction C. $\boldsymbol{\SQS(2v)}$-Construction}
\medskip

Let $v \equiv 2$ or $4~(\mathrm{mod}~6)$. Consider two disjoint copies
of $K_v$, with vertex sets $X$ and $Y$ such that $\vert X\vert=\vert Y\vert=v$.
Let $(X, \mathcal{A})$ and $(Y, \mathcal{B})$ be any two
$\SQS(v)$. Let $F= \{F_1,\ldots, F_{v-1}\}$ and $G = \{G_1,\ldots,
G_{v-1}\}$, be two $1$-factorizations of $K_{v}$ on $X$ and $Y,$
respectively.
Assume that $\mathcal{C}=\mathcal{A}\cup\mathcal{B}\cup T$ on the
point set $Z = X\cup Y$, where the elements of $T$ are defined as
follows:

If $x_1, x_2 \in X$ and $y_1, y_2 \in Y$, then $\{x_1, x_2, y_1, y_2\}\in T$ 
if and only if there exists $i$, with $1\leq i \leq v-1$ such
that $x_1 x_2$ and $y_1 y_2$ are edges in $F_i$ and $G_i$, respectively.
It is shown in \cite{surveysqs} that $(Z, \mathcal{C})$ is an $\SQS(2v)$. 

\bigskip

In the following lemma, we assume that there are two $\SQS(v)$ with a
zero-sum $k$-flow. Then, we find a zero-sum $k$-flow for an $\SQS(2v)$.

\begin{lem}\label{sqs(2v)}
Let $(X, \mathcal{A})$ and $(Y, \mathcal{B})$ be two $\SQS(v)$ with
$X\cap Y=\varnothing$, where both $\SQS(v)$ have a zero-sum $k$-flow
for $k\geq 3$. Then there is an $\SQS(2v)$ with a zero-sum $k$-flow.
\end{lem}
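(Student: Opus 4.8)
The plan is to retain the given zero-sum $k$-flows on $\mathcal{A}$ and on $\mathcal{B}$ and to choose weights for the new blocks in $T$ so that, at every point, the blocks of $T$ contribute a net weight of $0$. Since each point of $X$ already has weight $0$ from the blocks of $\mathcal{A}$ and lies in no block of $\mathcal{B}$, and symmetrically for $Y$, any such choice yields a zero-sum flow for the $\SQS(2v)$ of Construction~C, and it will be a $k$-flow as long as the new weights lie in $\{\pm1,\ldots,\pm(k-1)\}$.

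Because $v\equiv2$ or $4\pmod6$, $v$ is even, so each $1$-factor $F_i$ (on $X$) and $G_i$ (on $Y$) has exactly $n:=v/2$ edges. Fix $i$ with $1\le i\le v-1$. The blocks of $T$ arising from this index are precisely the $n^2$ sets $\{x_1,x_2,y_1,y_2\}$ with $x_1x_2\in F_i$ and $y_1y_2\in G_i$, and since the pair $\{x_1,x_2\}$ determines $i$, these block-sets are disjoint for distinct $i$. For a fixed $i$ I would arrange these $n^2$ blocks into an $n\times n$ grid whose rows are indexed by the edges of $F_i$ and whose columns by the edges of $G_i$. The crucial observation is that each point $x\in X$ lies in a unique edge of $F_i$, hence in exactly the $n$ blocks forming one row of the grid, while each point of $Y$ occupies one column.

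It therefore suffices to produce a single $n\times n$ matrix $M=(m_{rs})$ with every entry nonzero, every entry in $\{\pm1,\pm2\}$, and every row sum and column sum equal to $0$: using $M$ for each $i$ (after labelling the edges of $F_i$ and $G_i$ by $1,\ldots,n$ in any way) and giving the block from the $r$-th edge of $F_i$ and the $s$-th edge of $G_i$ the weight $m_{rs}$, the contribution of the index-$i$ blocks to $w(x)$, $x\in X$, is a full row sum of $M$, namely $0$; summing over $i$ gives $w(x)=0$, and symmetrically $w(y)=0$ for $y\in Y$. Such an $M$ exists for every $n\ge2$: take $m_{rs}=(-1)^{r+s}$ when $n$ is even, and when $n$ is odd take the circulant matrix whose first row consists of $(n+1)/2$ entries equal to $1$, $(n-3)/2$ entries equal to $-1$, and a single entry equal to $-2$ (its first-row sum is $0$, whence all row and column sums vanish). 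Here $n=v/2\ge2$ since an $\SQS(v)$ possessing a zero-sum flow must have $v\ge8$; alternatively one may invoke \lref{kn,n}, since a zero-sum flow of $K_{n,n}$ is exactly such a matrix. Finally all weights used lie in $\{\pm1,\pm2\}$ together with the values of the original flows, hence in $\{\pm1,\ldots,\pm(k-1)\}$ because $k\ge3$, which completes the construction.

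The argument is essentially mechanical once Construction~C is in hand; the only step requiring genuine care is the existence of the zero-sum matrix $M$ for both parities of $n$ (together with the small-order check ensuring $n\ge2$), which is where I would expect the one real, if modest, obstacle to lie.
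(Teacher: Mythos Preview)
Your argument is correct. You balance the $T$-blocks index by index: for each $i$ you use an $n\times n$ matrix with zero row and column sums so that the blocks from $F_i\times G_i$ already contribute zero at every point. The paper instead balances across indices: it assigns a single constant weight $w_i\in\{2,-1,-1,(-1)^4,\ldots,(-1)^{v-1}\}$ to \emph{all} blocks arising from index $i$, using only that every point of $X\cup Y$ lies in exactly $n=v/2$ of those blocks; since $\sum_{i=1}^{v-1}w_i=0$, the total $T$-contribution at each point is $n\sum_i w_i=0$. The paper's route is a little shorter and avoids having to build the zero-sum matrix $M$ (and hence the parity split and the small-order check), while yours has the mild conceptual advantage that each index is self-contained and would still work even if the number of indices were too small to balance by alternation.
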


\begin{proof}
In Construction C, we keep the values of all blocks in
$\mathcal{A}\cup\mathcal{B}$. Hence, it only remains to define weights
for the blocks in $T$.
First, we assign $2$, $-1$ and $-1$, to the elements of $F_1$,
$F_2$, and $F_3$, respectively, and assign $(-1)^i$ to $F_i$, for
$4\leq i\leq v-1$.  Note that $v-1$ is odd.  
Now, each block of $T$ contains exactly one element
of one of the $F_i$, so we may assign the value of that element to the
block. In this way, we obtain a zero-sum $3$-flow for an $\SQS(2v)$.
\end{proof}

\bigskip\noindent
\textbf{Construction D. $\boldsymbol{\SQS(uv)}$-Construction}
\medskip

Let $(X, \mathcal{A})$ and $(Y, \mathcal{B})$ be an $\SQS(u)$ and an
$\SQS(v)$, respectively, and consider the following properties: Define
a ternary operation $\langle ~, ~, ~ \rangle$ on $X$ by 
$\langle a,b, c\rangle=d$ whenever $\{a, b, c, d\}\in \mathcal{A}$, and 
$\langle a, a, b\rangle=b$.  Now, denote $X_y=X\times \{y\}$, and for
every $y\in Y$, let $\mathcal{A}_y$ be a collection of quadruples on
$X_y$ such that $(X_y, \mathcal{A}_y)$ is an $\SQS(u)$. Let
$Y=\{y_1,\ldots, y_{v}\}$, and $F^{(y_i)}=\{F_{1}^{(y_i)},
F_{2}^{(y_i)},\ldots, F_{u-1}^{(y_i)}\}$ for $i\in \{1,\ldots, v\}$,
be a $1$-factorization of $K_{u}$ on $X_{y_i}$. For the set $X\times Y$ 
define the following collection $\mathcal{C}$ of quadruples:

\begin{itemize}

\item[(1)] $\mathcal{C}$ contains every quadruple belonging to
  $\mathcal{A}_{y_i}$ for any $y_i \in Y.$

\item[(2)] If $(a, y_i), (b, y_i)\in X_{y_i}$ and 
  $(c, y_j), (d,y_j)\in X_{y_j}$ for $i<j$, then
$$\{(a, y_i), (b, y_i), (c, y_j), (d, y_j)\}\in \mathcal{C}$$ 
if and only if $(a, y_i)(b, y_i)$ and $(c, y_j)(d, y_j)$ are edges
in $F_{k}^{(y_i)}$ and $F_{k}^{(y_j)}$, respectively, for some $1\leq k\leq u-1$.

\item[(3)] For every quadruple $\{y_i, y_j, y_t, y_s\}\in \mathcal{B}$
  and for every three (not necessarily distinct) elements $a, b, c\in X$, 
  $\mathcal{C}$ contains 
  $\{(a, y_i), (b, y_j), (c, y_t), (\langle a, b, c\rangle, y_s)\}$ 
  where $i<j<t<s$.

\end{itemize}
It is shown in \cite{surveysqs} that $(X\times Y, \mathcal{C})$ is an
$\SQS(uv)$.

\medskip

In the following lemma we present a zero-sum $k$-flow for an $\SQS(uv)$
using Construction D.

\begin{lem}
  Let $(X, \mathcal{A})$ and $(Y, \mathcal{B})$ be an $\SQS(u)$ and an
  $\SQS(v)$, respectively, both having
  a zero-sum $k$-flow for some $k\geq 3$. Then there is
  an $\SQS(uv)$ which admits a zero-sum $k$-flow.
\end{lem}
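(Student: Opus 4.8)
The plan is to push the two given zero-sum $k$-flows through Construction~D. Call the blocks of $\mathcal{C}$ described in items~(1), (2) and~(3) the type-1, type-2 and type-3 blocks respectively, and arrange the weighting so that each of these three families contributes total weight $0$ at every point of $X\times Y$.

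First I would fix a zero-sum $k$-flow $\psi$ of the $\SQS(u)$ $(X,\mathcal{A})$ and a zero-sum $k$-flow $\phi$ of the $\SQS(v)$ $(Y,\mathcal{B})$. In Construction~D I would take each $(X_{y_i},\mathcal{A}_{y_i})$ to be the natural copy $\{\,B\times\{y_i\}:B\in\mathcal{A}\,\}$ of $(X,\mathcal{A})$, so that it carries the corresponding copy $\psi_i$ of $\psi$. Then I would weight the blocks of $\mathcal{C}$ as follows: a type-1 block lying in $\mathcal{A}_{y_i}$ receives its $\psi_i$-value; every type-3 block arising from a block $B\in\mathcal{B}$ (for any triple $(a,b,c)\in X^3$) receives the value $\phi(B)$; and, writing the $1$-factor index in Construction~D as $\ell$, a type-2 block built from $F^{(y_i)}_\ell$ and $F^{(y_j)}_\ell$ receives a value $w_\ell$ depending only on $\ell$, where $w_1=2$, $w_2=w_3=-1$, and $w_\ell=(-1)^\ell$ for $4\le\ell\le u-1$. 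Since $u\equiv2$ or $4\pmod6$, the number $u-1$ of $1$-factors is odd, and a short computation gives $\sum_{\ell=1}^{u-1}w_\ell=0$. Because $k\ge3$, every value used is nonzero and lies in $\{\pm1,\ldots,\pm(k-1)\}$.

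Next I would verify that the weight at an arbitrary point $(p,y_m)$ is $0$, type by type. The type-1 blocks through $(p,y_m)$ are precisely the blocks of $\mathcal{A}_{y_m}$ through that point, whose $\psi_m$-values sum to $0$. For type-2: for each $j\ne m$ and each $\ell\in\{1,\ldots,u-1\}$, the point $(p,y_m)$ lies in a unique edge of $F^{(y_m)}_\ell$, and that edge is paired with each of the $u/2$ edges of $F^{(y_j)}_\ell$; hence $(p,y_m)$ lies in exactly $u/2$ type-2 blocks of weight $w_\ell$ for every such pair $(j,\ell)$, and the total type-2 weight at $(p,y_m)$ is $(v-1)(u/2)\sum_{\ell=1}^{u-1}w_\ell=0$. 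For type-3: I claim that $(p,y_m)$ lies in exactly $u^2$ type-3 blocks coming from each block $B\in\mathcal{B}$ with $y_m\in B$. Writing $B=\{y_i,y_j,y_t,y_s\}$ with $i<j<t<s$: if $y_m$ is one of the first three of these, the claim is immediate because the remaining two of $a,b,c$ range freely over $X$; if $y_m=y_s$, the count equals the number of $(a,b,c)\in X^3$ with $\langle a,b,c\rangle=p$, which, splitting according to whether $a,b,c$ are pairwise distinct, exactly two equal, or all equal, and using that each point of $X$ lies in $(u-1)(u-2)/6$ blocks of $\mathcal{A}$, is $(u-1)(u-2)+3(u-1)+1=u^2$ as well. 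Hence the total type-3 weight at $(p,y_m)$ is $u^2\sum_{B\ni y_m}\phi(B)=0$, and adding the three contributions gives weight $0$ at every point.

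Finally I would confirm that the weighting is well defined and free of double-counting: the three families are pairwise disjoint, since the points of a type-1, type-2, respectively type-3 block have one, two, respectively four distinct second coordinates; moreover a type-3 block determines the block $B$ and the triple $(a,b,c)$ that produced it, while a type-2 block determines the unordered pair $\{y_i,y_j\}$ and the common index $\ell$ of the two $1$-factors, so each block receives a single weight. The step I expect to require the most care is the uniform count $u^2$ for type-3 blocks through a point sitting in the last slot $y_s$: one has to handle the degenerate triples $(a,b,c)$ in which two or three coordinates coincide, which is precisely where the total symmetry and the degeneracy rule of the derived ternary operation $\langle\cdot,\cdot,\cdot\rangle$ are needed.
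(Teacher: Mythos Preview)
Your proposal is correct and follows essentially the same approach as the paper: inherit the $\SQS(u)$ flow on the type-1 blocks, use the $2,-1,-1,(-1)^\ell$ weighting on the $1$-factor index for the type-2 blocks (exactly as in the paper's \lref{sqs(2v)}), and push the $\SQS(v)$ flow $\phi$ onto the type-3 blocks. Your treatment is in fact more thorough than the paper's, which asserts the uniform count of $u^2$ type-3 blocks through a point without addressing the case $y_m=y_s$; your case split using the degeneracy rule $\langle a,a,b\rangle=b$ and the replication number $(u-1)(u-2)/6$ is exactly what is needed to justify that step.
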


\begin{proof}
In Construction D, one can ignore the blocks from $(1)$ because they
inherit their value from the zero-sum flow of the $\SQS(u)$.  It is
not hard to see that there exists a zero-sum $3$-flow on the blocks
from $(2)$, by treating them as a complete bipartite graph similar to
the proof of \lref{sqs(2v)}.  That leaves the blocks from $(3)$, where
for each given block of $\mathcal{B}$ we have $u^3$ quadruples in
$\SQS(uv)$ because we have $u$ choices for each of $a, b$ and
$c$. There are exactly $u^2$ blocks obtained from a given block
$\{y_i, y_j, y_t, y_s\}\in \mathcal{B}$ that contain an element
$(a,y_i)$ for any fixed $a\in X$. Now, assign to all blocks obtained
from $\{y_i, y_j, y_t, y_s\}$, the weight of the block $\{y_i, y_j,
y_t, y_s\}$ in the zero-sum $k$-flow for the $\SQS(v)$. In this way we
obtain an $\SQS(uv)$ with a zero-sum $k$-flow.
\end{proof}

A $t$-design $(X, \mathcal{B})$ is said to be
$\alpha$-\textit{resolvable} if there exists a partition of the
collection $\mathcal{B}$ into parts called
$\alpha$-\textit{parallel classes}
(or $\alpha$-\textit{resolution classes}) such that each point of $X$
occurs in exactly $\alpha$ blocks in each class. When $\alpha=1$,
$\alpha$ is omitted.  We denote the number of $\alpha$-parallel
classes by $\rho=r/\alpha$, where $r$ is the number of appearances of
each point $x\in X$ among the blocks of the design.  A
$t$-$(v,k,\lambda)$ design is called an \textit{even design} when it
is $\alpha$-resolvable with even $\rho$.
Moreover, a $t$-$(v,k,1)$ design, $S(t, k, v)$, is called
$i$-\textit{partitionable} (some literature uses the alternative term
$i$-\textit{resolvable}, but to avoid confusion we will not) if the block
set can be partitioned into $S(i, k, v)$ designs for $0<i<t$.
Note that by \cite[Section 11]{surveysqs}, if $\alpha=i=2$,
then $2$-resolvability and $2$-partitionability are the same for
$SQS(v)$.  We refer the reader to \cite{someconstruction} for more
information about these concepts.

\begin{lem}
A $t$-$(v, k, \lambda)$ design has a zero-sum $2$-flow if and only if
it is even.
\end{lem}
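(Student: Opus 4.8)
The plan is to prove both directions of the equivalence by directly connecting a zero-sum $2$-flow (a $\{\pm1\}$-valued vector in the nullspace of the incidence matrix $A$) with a $2$-resolution of the design into an even number of parallel classes.

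\medskip

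\textbf{From even design to zero-sum $2$-flow.} Suppose the design is even, so it is $\alpha$-resolvable for some $\alpha$ with $\rho=r/\alpha$ even, where $r$ is the replication number. Write $\rho=2m$ and list the $\alpha$-parallel classes as $P_1,\dots,P_{2m}$. The key observation is that in each class $P_j$, every point lies in exactly $\alpha$ blocks, so if we assign a constant weight $c_j$ to all blocks of $P_j$, the total weight at any point $x$ is $\alpha\sum_j c_j [\text{over classes}]$ — more precisely $\sum_{j} \alpha c_j$ is the same at every point. Hence it suffices to choose signs $c_j\in\{+1,-1\}$ with $\sum_{j=1}^{2m} c_j=0$, which is possible because $2m$ is even: take half the classes positive and half negative. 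Assigning $f(B)=c_j$ for $B\in P_j$ gives a nowhere-zero $\{\pm1\}$-flow, i.e.\ a zero-sum $2$-flow.

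\medskip

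\textbf{From zero-sum $2$-flow to even design.} Conversely, suppose $f:\mathcal{B}\to\{+1,-1\}$ is a zero-sum flow. Let $\mathcal{B}^+=f^{-1}(1)$ and $\mathcal{B}^-=f^{-1}(-1)$. The zero-sum condition at a point $x$ says exactly that $x$ lies in equally many blocks of $\mathcal{B}^+$ as of $\mathcal{B}^-$; call this common number $\alpha_x$. Since each point lies in $r$ blocks total, $\alpha_x=r/2$ for every $x$, so in particular $r$ is even and the value $\alpha:=r/2$ is uniform. Thus $(X,\mathcal{B}^+)$ and $(X,\mathcal{B}^-)$ are each ``$\alpha$-regular'' configurations — each point lies in exactly $\alpha$ blocks of each. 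The remaining task is to refine this into an honest $\alpha$-resolution with an even number of classes. The clean way is to note $\{\mathcal{B}^+,\mathcal{B}^-\}$ is already a partition of $\mathcal{B}$ into two $\alpha$-parallel classes, giving $\rho=r/\alpha=2$, which is even; hence the design is even by definition (and in fact $2$-resolvable in the sense used in the paper, with the two classes being $\mathcal{B}^\pm$).

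\medskip

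\textbf{Anticipated obstacle.} The only subtlety is matching the bookkeeping of the definition of ``even design'': the definition fixes the $\alpha$ and lets $\rho=r/\alpha$, whereas the flow naturally produces $\alpha=r/2$, $\rho=2$. One must check these are consistent — i.e.\ that exhibiting \emph{some} $\alpha$-resolution with even $\rho$ suffices, and that the two-class partition $\mathcal{B}^\pm$ genuinely has each point in $\alpha$ blocks per class (which is exactly the zero-sum equation, so this is immediate). There is also the degenerate possibility that $\mathcal{B}^+$ or $\mathcal{B}^-$ is empty, but then $r=0$ and the design is trivial; otherwise everything goes through. I expect the proof to be short once the correspondence ``zero-sum $2$-flow $\leftrightarrow$ sign-partition of $\mathcal{B}$ into two equi-replicate halves'' is stated.
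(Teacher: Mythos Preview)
Your proof is correct and follows essentially the same approach as the paper's: in one direction you sign the $\alpha$-parallel classes alternately to obtain a $\{\pm1\}$-flow, and in the other you split $\mathcal{B}$ by sign into two $(r/2)$-parallel classes to exhibit an $\alpha$-resolution with $\rho=2$. The paper's argument is the same, only stated more tersely.
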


\begin{proof}
Let $(X, \mathcal{B})$ be a $t$-$(v, k, \lambda)$ design.  If $(X,
\mathcal{B})$ is even, it is sufficient to assign $+1$ to each block
in half, namely $\frac{\rho}{2}$, of the $\alpha$-parallel classes and assign
$-1$ to each block in the other half of the $\alpha$-parallel classes. 
Note that $\alpha = r/ \rho$, where $r$ is the number of appearances of
each point $x\in X$ among the blocks of the design.  
For the converse, suppose $(X, \mathcal{B})$ has a zero-sum
$2$-flow. Since for each arbitrary element $x\in X$, there exist $r$
blocks containing $x$, exactly half of these blocks have the value
$+1$ and the rest have the value $-1$. If we take all blocks with the
same value in a set, we have two sets such that in each of them
every element appears in $\frac{r}{2}$ blocks. Therefore,
$\alpha=\frac{r}{2}$ and $\rho=2$. Hence, $(X, \mathcal{B})$ is an even design.
\end{proof}

\begin{rem}\label{res1}
By \cite[Theorem 10.1]{survey2}, a resolvable $S(2, 4, v)$ exists if
and only if $v \equiv 4 ~(\mathrm{mod}~ 12)$.  Moreover, a
$2$-partitionable $\SQS(v)$ is one that can be decomposed into $S(2,4,v)$
designs. According to \cite{Hanani}, a Steiner system $S(2,4,v)$
exists if and only if $v \equiv 1$ or $4~ (\mathrm{mod} ~12)$. So, a
necessary condition for the existence of a $2$-partitionable $\SQS(v)$ is
$v \equiv 4~ (\mathrm{mod} ~12)$. For any positive
integer $n$, there exists a $2$-partitionable $\SQS(4n)$ as well as a
$2$-partitionable $\SQS(2pn+2)$, for $p\in\{7, 31, 127\}$, see 
\cite{someconstruction}.
\end{rem}

\begin{lem}
Let $(X, \mathcal{B})$ be a $2$-resolvable $\SQS(v)$. Then $(X,
\mathcal{B})$ has a zero-sum $3$-flow. Moreover, the derived triple
system $(X_x, \mathcal{B}(x))$ for any $x\in X$, also has a zero-sum
$3$-flow.
\end{lem}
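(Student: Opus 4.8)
The plan is to leverage the preceding lemma (a $t$-design has a zero-sum $2$-flow iff it is even) together with a $2$-resolution of the $\SQS(v)$. Since $(X,\mathcal{B})$ is $2$-resolvable, the block set partitions into $2$-parallel classes $P_1,\ldots,P_\rho$, where $\rho = r/2$ and $r = \tfrac16(v-1)(v-2)$ is the number of blocks through each point. If $\rho$ is even, the design is even, so by the previous lemma it already has a zero-sum $2$-flow, and hence a zero-sum $3$-flow a fortiori. So the only work is when $\rho$ is odd. First I would handle that case by assigning the weight $+2$ to every block of $P_1$, then $-1$ to every block of $P_2$ and $P_3$, and then alternating $+1,-1,+1,-1,\ldots$ across $P_4,\ldots,P_\rho$. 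Since each point lies in exactly two blocks of each $P_i$, the contribution to $w(x)$ from $P_1$ is $+4$, from $P_2\cup P_3$ is $-4$, and the remaining classes (there being an even number $\rho-3$ of them) contribute $2(+1)+2(-1)+\cdots=0$; hence $w(x)=0$ for every $x\in X$. All weights lie in $\{\pm1,\pm2\}$, so this is a zero-sum $3$-flow.

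For the derived system, recall that $(X_x,\mathcal{B}(x))$ consists of the blocks through $x$ with $x$ deleted, which is an $\STS(v-1)$. The key observation is that the $2$-resolution of the $\SQS(v)$ restricts to a $2$-resolution of the derived design: within each class $P_i$, the point $x$ appears in exactly two blocks $B, B'$, so $P_i$ contributes the pair $\{B\setminus\{x\}, B'\setminus\{x\}\}$ to $\mathcal{B}(x)$. These two triples are disjoint (they share only $x$ in the original blocks, since any two blocks of an $\SQS$ meet in at most two points and a $2$-parallel class is a collection of blocks that is a $2$-design-like structure — in fact $B\cap B'=\{x\}$ because otherwise $B\cap B'$ would be a triple or pair determining a unique block), and together with the other classes they cover each point of $X_x$ the right number of times. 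More carefully: a point $y\neq x$ lies in exactly $\lambda_2 = \tfrac12(v-2)$ blocks together with $x$, i.e., in $\tfrac12(v-2)$ blocks of $\mathcal{B}(x)$, and within each $P_i$ the two blocks through $x$ contain between them a controlled number of copies of $y$. I would check that this number is the same for every class, giving a $\mu$-parallel decomposition of the $\STS(v-1)$; then the same alternating-weight argument (with a leading $+2$ when the number of classes is odd) produces a zero-sum $3$-flow on $(X_x,\mathcal{B}(x))$.

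The main obstacle is the bookkeeping in the last step: verifying that the restriction of a $2$-parallel class of the $\SQS(v)$ to the derived system is genuinely an $\alpha'$-parallel class for a fixed $\alpha'$ independent of the class, and computing that $\alpha'$. One must rule out the possibility that a point $y\neq x$ appears in both blocks of $P_i$ that pass through $x$, or in neither, in a way that varies with $i$. Here I would use that $\{x,y\}$ lies in exactly $\tfrac12(v-2)$ blocks of $\mathcal{B}$, distributed one per class among those classes where the $x$-block also contains $y$; combined with $|P_i|$ being constant and each of the two $x$-blocks in $P_i$ having three points, a counting identity pins down the structure. Once this is in hand, the flow construction is immediate and identical in spirit to the $\SQS(v)$ case above.
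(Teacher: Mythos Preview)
Your weighting scheme for the $\SQS(v)$ (one class gets $+2$, two get $-1$, the rest alternate $\pm1$) is exactly the device the paper uses, but you have misread what ``$2$-resolvable'' means in this lemma. As the paper notes just before the statement, for Steiner quadruple systems $2$-resolvability coincides with $2$-\emph{partitionability}: the block set decomposes into $\tfrac{v-2}{2}$ copies of an $S(2,4,v)$, not into classes in which each point occurs exactly twice. Indeed, the existence of an $S(2,4,v)$ forces $v\equiv 4\pmod{12}$, and for such $v$ the replication number $r=\tfrac{(v-1)(v-2)}{6}$ is odd; so your $\rho=r/2$ is not even an integer, and the $\alpha=2$ resolution you posit cannot exist for precisely the designs the lemma is about. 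The paper instead uses the $\tfrac{v-2}{2}$ classes (an odd number) directly.

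This misreading becomes a genuine gap in the derived-system part. With the intended decomposition the argument is clean: in each $S(2,4,v)$ class every pair $\{x,y\}$ lies in exactly one block, so the blocks through $x$ (with $x$ removed) form a true parallel class of the derived $\STS(v-1)$. One obtains a $1$-resolution with $\tfrac{v-2}{2}$ classes and the same $2,-1,-1,\pm1,\ldots$ assignment finishes. Under your reading, by contrast, the step you flagged with ``I would check'' fails outright: across your $\rho$ classes the pair $\{x,y\}$ lies in $\tfrac{v-2}{2}$ blocks in total, so the average number of $x$-blocks per class containing $y$ would be $\tfrac{v-2}{2\rho}=\tfrac{6}{v-1}$, which is not an integer once $v>7$. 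Hence the restriction cannot be a uniform $\alpha'$-resolution, and your plan for $(X_x,\mathcal{B}(x))$ does not go through.
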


\begin{proof}
We can decompose $(X, \mathcal{B})$ into $\frac{v-2}{2}~S(2, 4, v)$
designs. We know that in this case $v \equiv 4~(\mathrm{mod}~12)$, so
$\frac{v-2}{2}$ is an odd number. Using this decomposition, it is not
hard to construct a zero-sum $3$-flow for $(X,\mathcal{B})$.  For the
second part, let $x\in X$ and consider all blocks of $(X,
\mathcal{B})$ containing $x$ to construct the derived $\STS(v-1)$. Let
$y\in X\setminus\{x\}$. As we know each pair of elements of $X$
appears in any obtained $S(2, 4, v)$ exactly once; $y$ appears in all
of these $S(2, 4, v)$. By an appropriate assignment (using the values
$2, \pm1$), one can obtain a zero-sum $3$-flow on the derived
$\STS(v-1)$.
\end{proof}

\begin{rem}
By \cite{surveysqs}, the constructions of $\SQS(8)$ and $\SQS(10)$ are
unique.  We show that $\SQS(8)$ and $\SQS(10)$ admit a zero-sum
$3$-flow.  The following blocks form $\SQS(8)$, and the value from
$\{\pm 1, 2\}$ given on the right hand side of each block is the flow
assigned to that block.
\begin{equation*}
\begin{array}{cr@{\hskip2cm}cr}
1~2~4~8&\textcolor{red}{1}&3~5~6~7&\textcolor{red}{1}\\
2~3~5~8&\textcolor{red}{1}&1~4~6~7&\textcolor{red}{1}\\
3~4~6~8&\textcolor{red}{2}&1~2~5~7&\textcolor{red}{2}\\
4~5~7~8&\textcolor{red}{-1}&1~2~3~6&\textcolor{red}{-1}\\
1~5~6~8&\textcolor{red}{-1}&2~3~4~7&\textcolor{red}{-1}\\
2~6~7~8&\textcolor{red}{-1}&1~3~4~5&\textcolor{red}{-1}\\
1~3~7~8&\textcolor{red}{-1}&2~4~5~6&\textcolor{red}{-1}
\end{array}
\end{equation*}

Moreover, the blocks below form $\SQS(10)$, with the assigned flows of
a zero-sum $2$-flow specified next to the corresponding blocks.  Note
that its derived $\STS(9)$ also has a zero-sum $2$-flow.
\begin{equation*}
\begin{array}{cr@{\hskip1cm}cr@{\hskip1cm}cr}
1~2~4~5  &\textcolor{red}{1} &  1~2~3~7&\textcolor{red}{-1} &  1~3~5~8 &\textcolor{red}{1} \\
2~3~5~6&\textcolor{red}{-1}  &  2~3~4~8  &\textcolor{red}{1}  &  2~4~6~9 &\textcolor{red}{-1}\\
3~4~6~7  &\textcolor{red}{1}   &  3~4~5~9&\textcolor{red}{-1}   &  3~5~7~0  &\textcolor{red}{1}\\
4~5~7~8&\textcolor{red}{-1} &  4~5~6~0  &\textcolor{red}{1} &  1~4~6~8&\textcolor{red}{-1}\\
5~6~8~9  &\textcolor{red}{1} &  1~5~6~7&\textcolor{red}{-1} &  2~5~7~9  &\textcolor{red}{1} \\
6~7~9~0&\textcolor{red}{-1} &  2~6~7~8   &\textcolor{red}{1} &  3~6~8~0 &\textcolor{red}{-1}\\
1~7~8~0 &\textcolor{red}{1} &   3~7~8~9&\textcolor{red}{-1} &   1~4~7~9  &\textcolor{red}{1}\\
1~2~8~9&\textcolor{red}{-1} &  4~8~9~0 &\textcolor{red}{1} &  2~5~8~0&\textcolor{red}{-1} \\
2~3~9~0 &\textcolor{red}{1} &  1~5~9~0&\textcolor{red}{-1} &  1~3~6~9   &\textcolor{red}{1}\\
1~3~4~0&\textcolor{red}{-1} &   1~2~6~0  &\textcolor{red}{1} &   2~4~7~0 &\textcolor{red}{-1}
\end{array}
\end{equation*}
\end{rem}

\begin{cor}
Every $\SQS(v)$ admits a zero-sum $k$-flow for some positive integer $k$.
\end{cor}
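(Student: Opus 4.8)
The plan is to prove the corollary by combining the existence results for $\SQS(v)$ with the recursive constructions established earlier in this section, together with a small base case. Recall that Hanani's theorem says $\SQS(v)$ exists exactly when $v\equiv2$ or $4\pmod 6$. So I need a zero-sum $k$-flow (for some $k$ depending on $v$) for every such $v$.

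First I would dispose of the smallest cases by hand: the remark immediately preceding the corollary exhibits an explicit zero-sum $3$-flow for $\SQS(8)$ and a zero-sum $2$-flow for $\SQS(10)$, and these are the unique systems of those orders. Since $v=2$ and $v=4$ do not admit an $\SQS$, and the next admissible value is $v=8$, these two cases cover all admissible $v\le 13$. Next I would invoke the general Theorem~\ref{in3}: every non-symmetric $2$-design admits a zero-sum $k$-flow for some $k$. Since an $\SQS(v)$ is in particular a $2$-$(v,4,\frac{v-2}{2})$ design (any pair lies in $\frac{v-2}{2}$ blocks), and it is non-symmetric for $v\ge 8$ because the number of blocks $\frac1{24}v(v-1)(v-2)$ exceeds $v$, Theorem~\ref{in3} already delivers a zero-sum $k$-flow for \emph{every} $\SQS(v)$ directly. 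This makes the corollary essentially an immediate consequence of Theorem~\ref{in3} once one checks non-symmetry, so the cleanest proof is simply: every $\SQS(v)$ is a non-symmetric $2$-design, hence Theorem~\ref{in3} applies.

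If instead one wants a proof that stays within the self-contained framework of this section (perhaps to emphasize the recursive machinery), the alternative route is: handle $\SQS(8)$ and $\SQS(10)$ explicitly as above, and then use the doubling Lemma~\ref{sqs(2v)} and the product Lemma for $\SQS(uv)$ to propagate zero-sum $k$-flows. One would argue by strong induction on $v$: for admissible $v>10$, either $v$ is even with $v/2$ admissible (then apply the $\SQS(2v)$ construction to two copies of the smaller system, which exists with a flow by induction), or one factors $v$ suitably as $uv'$ with both factors admissible and smaller, applying the product construction. The number-theoretic bookkeeping — checking that every admissible $v$ beyond the base cases can be reached this way — would be the one point requiring care, since admissibility ($v\equiv 2,4\pmod 6$) interacts nontrivially with halving and with factorization (e.g. $v\equiv 2\pmod 6$ halved gives $v/2\equiv 1\pmod 3$, which need not be admissible).

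The main obstacle, then, depends on which route is taken. For the short route there is essentially no obstacle beyond verifying non-symmetry of $\SQS(v)$, which is routine arithmetic. For the self-contained route, the genuine difficulty is the covering argument: showing the two recursions plus the base cases $\SQS(8)$ and $\SQS(10)$ suffice to reach all admissible orders. Because this is not obviously true (the halving operation frequently leaves the admissible set), I expect the intended proof is simply the one-line appeal to Theorem~\ref{in3}, so that is the version I would write: \emph{Every $\SQS(v)$ is a non-symmetric $2$-$(v,4,(v-2)/2)$ design, so the result follows from Theorem~\ref{in3}.}
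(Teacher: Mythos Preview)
Your proposal is correct and matches the paper's own proof, which is simply: every $3$-design is also a $2$-design, so Theorem~\ref{in3} applies. You are in fact slightly more careful than the paper in explicitly verifying that an $\SQS(v)$ is non-symmetric as a $2$-design (the equality $\tfrac{1}{24}v(v-1)(v-2)=v$ has no integer solution), a hypothesis the paper uses but does not check; one small inaccuracy is your claim that $v=4$ does not admit an $\SQS$, since the trivial single-block $\SQS(4)$ exists (and indeed has no zero-sum flow, so the corollary tacitly assumes $v\ge8$).
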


\begin{proof}
Since every $3$-design is also a $2$-design, by Theorem \ref{in3}, the
assertion is proved.
\end{proof}


\end{document}